\begin{document}

% define theorem environments
\newtheorem{theorem}{Theorem}    %[section]
\newtheorem{proposition}[theorem]{Proposition}
\newtheorem{conjecture}[theorem]{Conjecture}
\def\theconjecture{\unskip}
\newtheorem{corollary}[theorem]{Corollary}
\newtheorem{lemma}[theorem]{Lemma}
\newtheorem{sublemma}[theorem]{Sublemma}
\newtheorem{observation}[theorem]{Observation}
\theoremstyle{definition}
\newtheorem{definition}{Definition}
\newtheorem{notation}[definition]{Notation}
\newtheorem{remark}[definition]{Remark}
\newtheorem{question}[definition]{Question}
\newtheorem{questions}[definition]{Questions}
\newtheorem{example}[definition]{Example}
\newtheorem{problem}[definition]{Problem}
\newtheorem{exercise}[definition]{Exercise}

\numberwithin{theorem}{section}
\numberwithin{definition}{section}
\numberwithin{equation}{section}

\def\earrow{{\mathbf e}}
\def\rarrow{{\mathbf r}}
\def\uarrow{{\mathbf u}}
\def\tpar{T_{\rm par}}
\def\apar{A_{\rm par}}

\def\reals{{\mathbb R}}
\def\torus{{\mathbb T}}
\def\heis{{\mathbb H}}
\def\integers{{\mathbb Z}}
\def\naturals{{\mathbb N}}
\def\complex{{\mathbb C}\/}
\def\distance{\operatorname{distance}\,}
\def\support{\operatorname{support}\,}
\def\dist{\operatorname{dist}\,}
\def\Span{\operatorname{span}\,}
\def\degree{\operatorname{degree}\,}
\def\kernel{\operatorname{kernel}\,}
\def\dim{\operatorname{dim}\,}
\def\codim{\operatorname{codim}}
\def\trace{\operatorname{trace\,}}
\def\Span{\operatorname{span}\,}
\def\ZZ{ {\mathbb Z} }
\def\p{\partial}
\def\rp{{ ^{-1} }}
\def\Re{\operatorname{Re\,} }
\def\Im{\operatorname{Im\,} }
\def\ov{\overline}
\def\eps{\varepsilon}
\def\lt{L^2}
\def\diver{\operatorname{div}}
\def\curl{\operatorname{curl}}
\def\etta{\eta}
\newcommand{\norm}[1]{ \|  #1 \|}
\def\Span{\operatorname{span}}
\def\expect{\mathbb E}
\def\paraboloid{{\mathbb P}^2}
\def\Sbest{{\mathbf S}}
\def\Pbest{{\mathbf P}}

\newcommand{\Norm}[1]{ \left\|  #1 \right\| }
\newcommand{\set}[1]{ \left\{ #1 \right\} }
\def\one{\mathbf 1}
\newcommand{\modulo}[2]{[#1]_{#2}}

\def\scriptf{{\mathcal F}}
\def\scriptg{{\mathcal G}}
\def\scriptm{{\mathcal M}}
\def\scriptb{{\mathcal B}}
\def\scriptc{{\mathcal C}}
\def\scriptt{{\mathcal T}}
\def\scripti{{\mathcal I}}
\def\scripte{{\mathcal E}}
\def\scriptv{{\mathcal V}}
\def\scriptS{{\mathcal S}}
\def\scripta{{\mathcal A}}
\def\scriptr{{\mathcal R}}
\def\scripto{{\mathcal O}}
\def\scripth{{\mathcal H}}
\def\scriptd{{\mathcal D}}
\def\scriptl{{\mathcal L}}
\def\scriptn{{\mathcal N}}
\def\frakv{{\mathfrak V}}

\author{Michael Christ}
\address{
        Michael Christ\\
        Department of Mathematics\\
        University of California \\
        Berkeley, CA 94720-3840, USA}
\email{mchrist@math.berkeley.edu}
\thanks{The first author was supported in part by NSF grant DMS-0901569.}
%The second author was supported by the National Science Foundation under agreement ??DMS-0635607??.
%Any opinions, findings, and conclusions
%or recommendations expressed in this paper are those of the authors
%and do not necessarily reflect the views of the National Science Foundation.}
%s DMS-0401260 and

\author{Shuanglin Shao}
\address{Shuanglin Shao\\
%School of Mathematics, Institute for Advanced Study, Princeton, NJ 08540
%\\
IMA, University of Minnesota, Minneapolis, MN 55455
}
\email{slshao@ima.umn.edu}

\date{% October 6, 2009.}
June 17, 2010.}

\title[Extremizers of a Fourier restriction inequality]
{On the extremizers of an  \\ adjoint Fourier restriction inequality}

\begin{abstract}
The adjoint Fourier restriction inequality for the sphere $S^2$
states that if $f\in\lt(S^2,\sigma)$ then $\widehat{f\sigma}\in
L^4(\reals^3)$. We prove that all critical points $f$ of the
functional $\norm{\widehat{f\sigma}}_{L^4}/\norm{f}_{\lt}$
are smooth; that any complex-valued extremizer
for the inequality is a nonnegative extremizer multiplied
by the character $e^{ix\cdot\xi}$ for some $\xi$; and
that complex-valued
extremizing sequences for the inequality are precompact
modulo multiplication by characters.
\end{abstract}

\maketitle
%{\Small \tableofcontents}

\section{Results}

Let $S^2$ denote the unit sphere in $\reals^3$, equipped
with surface measure $\sigma$. The adjoint Fourier restriction
inequality states that
there exists $C<\infty$ such that
\begin{equation} \label{inequalityR}
\norm{\widehat{f\sigma}}_{L^4(\reals^3)}
\le C\norm{f}_{\lt(S^2,\sigma)}
\end{equation}
for all $f\in\lt(S^2)$.
With the Fourier transform defined to be
$\widehat{g}(\xi) = \int e^{-ix\cdot\xi}g(x)\,dx$,
denote by
\begin{equation}
\scriptr
=\sup_{0\ne f\in\lt(S^2)} \norm{\widehat{f\sigma}}_{L^4(\reals^3)}
\ \big/\ \norm{f}_{\lt(S^2,\sigma)}
\end{equation}
the optimal constant in the inequality \eqref{inequalityR}.

In an earlier paper \cite{christshao1} we have proved that
there exists $f\in\lt$ which extremizes this inequality,
and that any sequence of nonnegative functions $\{f_\nu\}\subset\lt(S^2)$
satisfying $\norm{f_\nu}_2\to 1$
and $\norm{\widehat{f_\nu\sigma}}_4\to\scriptr$
is precompact in $\lt(S^2)$.
In the present paper we prove that all extremizers are infinitely differentiable,
and show that precompactness does continue to hold
for complex-valued extremizing sequences, modulo the action of
a natural noncompact symmetry group of the inequality.

\eqref{inequalityR}
is equivalent, by Plancherel's theorem, to
\begin{equation} \label{secondSversion}
\norm{f\sigma*f\sigma}_{\lt(\reals^3)} \le \Sbest^2\norm{f}_{\lt(S^2)}^2,
\end{equation}
where
$\scriptr= (2\pi)^{3/4}\Sbest$
and $*$ denotes convolution of measures.

\begin{definition}
An extremizing sequence for the inequality \eqref{inequalityR}
is a sequence $\{f_\nu\}$ of functions in $\lt(S^2)$
satisfying $\norm{f_\nu}_2\le 1$
such that $\norm{\widehat{f_\nu\sigma}}_{L^4(\reals^3)}\to\scriptr$
as $\nu\to\infty$.

An extremizer for the inequality \eqref{inequalityR}
is a function $f\ne 0$ which satisfies $\norm{\widehat{f\sigma}}_4=\scriptr\norm{f}_2$.
\end{definition}

Define the functional
\begin{equation}
\Lambda(f)=\norm{\widehat{f\sigma}}_4^4/\norm{f}_2^4.
\end{equation}
A real-valued function
$0\ne f\in\lt(S^2)$ is a critical point of $\Lambda$ if and only
if $f$ satisfies the generalized Euler-Lagrange equation
\begin{equation} \label{eulerlagrange}
\Big(f\sigma*f\sigma*f\sigma\Big)\Big|_{S^2} = \lambda \norm{f}_2^2\, f
\ \ \ \text{ almost everywhere on } S^2
\end{equation}
for some scalar $\lambda\in\reals^+$.
See for instance \cite{christquilodran},
where a more general result of this type is proved.
$f$ is an extremum for $\Lambda$ if and only
if this holds with $\lambda = \Sbest^4$.

\begin{theorem} \label{thm:smoothness}
For any $\lambda\in\complex$,
any solution $f\in\lt(S^2)$
of \eqref{eulerlagrange} is $C^\infty$.
\end{theorem}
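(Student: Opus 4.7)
The strategy is a two-stage bootstrap: first improve the integrability of $f$ from $\lt(S^2)$ to $L^\infty(S^2)$, and then iterate to upgrade regularity to $C^\infty$. The engine of the argument is the Euler--Lagrange equation \eqref{eulerlagrange}, which presents $f$ (up to the nonzero scalar $\lambda\norm{f}_2^2$) as the restriction to $S^2$ of the triple convolution $f\sigma*f\sigma*f\sigma$. The entire argument therefore reduces to showing that this trilinear restriction operation is regularity-improving.

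The core of the integrability step is a trilinear smoothing inequality of the form
\[
\norm{(g_1\sigma*g_2\sigma*g_3\sigma)|_{S^2}}_{L^q(S^2)}\le C\prod_{i=1}^{3}\norm{g_i}_{L^{p_i}(S^2)},
\]
with $q>\min_i p_i$. On the Fourier side the left-hand side equals $c\cdot R\bigl(\prod_i\widehat{g_i\sigma}\bigr)$, where $R:L^{4/3}(\reals^3)\to\lt(S^2)$ is the adjoint of the extension operator of \eqref{inequalityR}. Combining \eqref{inequalityR} with the trivial bound $\norm{\widehat{g\sigma}}_\infty\le C\norm{g}_1$, interpolating for each factor, applying H\"older to the product, and using the interpolated restriction $R:L^s(\reals^3)\to L^t(S^2)$ (Stein--Tomas together with the trivial $L^1\to L^\infty$) reduces the smoothing to placing at least one factor $\widehat{g_i\sigma}$ in some $L^r$ with $r<4$. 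At the first iteration this improvement is furnished by bilinear/multilinear restriction refinements for the sphere, after a cap decomposition of $f$; at subsequent iterations, the stationary-phase decay $|\widehat{g\sigma}(\xi)|\le C(1+|\xi|)^{-1}$, valid once $g\in L^\infty$, does the job. Iteration of the smoothing inequality in \eqref{eulerlagrange} lifts $f$ through $L^p(S^2)$ with $p\to\infty$, and a final step yields $f\in L^\infty(S^2)$.

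Continuity of $f$ then follows from the pointwise majorization $|f\sigma*f\sigma*f\sigma|\le\norm{f}_\infty^3\,(\sigma*\sigma*\sigma)$ together with the elementary fact that $\sigma*\sigma*\sigma$ is a continuous function on $\reals^3$. Higher regularity is extracted by differentiating \eqref{eulerlagrange}: rotation invariance of $\sigma$ allows a tangential derivative along $S^2$ to be transferred inside the triple convolution onto a single factor $f$, producing a trilinear equation for $\p_\tau f$ of exactly the shape of \eqref{eulerlagrange}, to which the same smoothing inequality applies. Induction on the order of differentiation delivers $f\in C^k(S^2)$ for every $k$, and hence $f\in C^\infty$.

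The principal obstacle is the first strict integrability gain. The trilinear Stein--Tomas bound $\norm{(f\sigma*f\sigma*f\sigma)|_{S^2}}_{\lt}\le C\norm{f}_2^3$ merely reproduces $\lt$, so a genuine improvement from the pure $\lt$ hypothesis must exploit the curvature of $S^2$ via multilinear restriction theory or a comparable refinement; once any strict gain is secured, the subsequent iterations and the smoothness step are essentially routine.
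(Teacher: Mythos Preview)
Your proposal has a genuine gap at precisely the point you flag as ``the principal obstacle.'' You assert that the first strict gain---lifting $f$ from $\lt$ to some $L^q$ with $q>2$, or equivalently to some $H^s$ with $s>0$---is ``furnished by bilinear/multilinear restriction refinements for the sphere, after a cap decomposition of $f$.'' This does not work as stated. The functional $\norm{\widehat{f\sigma}}_4^4/\norm{f}_2^4$ is essentially scale-invariant at small scales: a function $f$ concentrated on a pair of antipodal $\delta$-caps comes within a fixed factor of saturating the trilinear $\lt\to\lt$ bound, uniformly in $\delta$. Consequently there is \emph{no} inequality of the form $\norm{(f\sigma*f\sigma*f\sigma)|_{S^2}}_{L^q}\le C\norm{f}_2^3$ with $q>2$ (nor with $H^s$, $s>0$, on the left), valid for arbitrary $f\in\lt(S^2)$. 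Multilinear restriction gives gains only when the inputs are supported on \emph{transversal} caps; after a cap decomposition of a single function $f$, the diagonal terms---all three factors living on the same small cap---are exactly the scale-invariant ones and absorb no improvement. You have not explained how the equation \eqref{eulerlagrange} is to be used to defeat these terms, and without that the bootstrap never starts.

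The paper takes a fundamentally different route for this first step. Rather than seeking a smoothing inequality valid for all $f$, it exploits that a solution $f$ is one \emph{fixed} $\lt$ function. One writes $f=\varphi_\eps+g_\eps$ with $\varphi_\eps\in C^\infty$ and $\norm{g_\eps}_{\lt}<\eps$, expands \eqref{eulerlagrange}, and observes that the terms at least quadratic in $g_\eps$ have $\lt$ norm $O(\eps^2)$. The remaining ``linear'' part $\scriptl(\varphi_\eps,g_\eps)$ has a (possibly enormous) $\Lambda_\alpha$ bound coming from $\varphi_\eps$, but $H^0$ norm $O(\eps)$; interpolating, its $H^{s}$ norm is small for some $s=s(\eps)>0$ depending on $f$. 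A contraction-mapping argument in $H^{s(\eps)}$ then shows $g_\eps\in H^{s(\eps)}$, hence $f\in H^{s(\eps)}$. The exponent $s$ is not universal---it depends on the particular solution---which is exactly what the scale-invariance obstruction demands. Once this initial gain is secured, the subsequent bootstrap (carried out in rotation-difference spaces $\scripth^s$, in the same spirit as your second stage) is routine.
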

Thus any real-valued critical point,
and in particular any nonnegative extremizer, of $\Lambda$ is $C^\infty$.
It is possible to show by a straightforward iteration argument
that there exists a Gevrey class which contains all critical points,
but we have not been able to show that these are real analytic.

\begin{theorem}\label{thm:complexextremizers}
Every
complex-valued extremizer for the
inequality \eqref{inequalityR}
is of the form
\begin{equation}ce^{ix\cdot\xi}F(x)\end{equation}
where
$\xi\in \reals^3$, $c\in\complex$,
and $F$ is a nonnegative extremizer.
\end{theorem}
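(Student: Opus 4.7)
\emph{Proof plan.}
Let $f$ be a complex-valued extremizer of \eqref{inequalityR} and write $f = |f|e^{i\phi}$ on $\{|f|>0\}\subset S^2$, where $\phi$ is a measurable real-valued function. For a.e.\ $\eta\in\reals^3$ with $0<|\eta|<2$ the convolution $f\sigma*f\sigma(\eta)$ is representable as an integral of $f(x)f(\eta-x)$ over the circle $C(\eta)=S^2\cap(\eta-S^2)$ against a natural smooth positive measure $\mu_\eta$. The elementary triangle inequality yields $|f\sigma*f\sigma(\eta)|\le |f|\sigma*|f|\sigma(\eta)$, and taking $\lt$ norms and invoking \eqref{secondSversion} produces the chain
\[
\|f\sigma*f\sigma\|_2 \;\le\; \big\||f|\sigma*|f|\sigma\big\|_2 \;\le\; \Sbest^{2}\|f\|_2^{2}.
\]
Extremality of $f$ forces both inequalities to be equalities. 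The right equality shows that $|f|$ is a nonnegative extremizer and is therefore $C^{\infty}$ by Theorem \ref{thm:smoothness}; the left asserts that the pointwise triangle inequality is saturated for a.e.\ $\eta$.

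Saturation means that on the subset of $C(\eta)$ where $|f(x)f(\eta-x)|>0$, the unimodular expression $e^{i(\phi(x)+\phi(\eta-x))}$ equals a single constant $e^{i\theta(\eta)}$ for $\mu_\eta$-a.e.\ $x$. This yields the functional equation
\[
\phi(x)+\phi(y)\equiv\theta(x+y)\pmod{2\pi}
\]
for almost every pair $(x,y)\in S^2\times S^2$ with $|f(x)f(y)|>0$. Fixing a point $y_0$ in the support of $|f|$ gives the pointwise expression $\phi(x)\equiv\theta(x+y_0)-\phi(y_0)$; substituting back and then fixing a second support point $y_1$ shows that for every shift $v=y_1-y_0$,
\[
\theta(\eta+v)-\theta(\eta)\equiv\text{const}\pmod{2\pi}\quad\text{for a.e.\ }\eta.
\]
Since a subset of $S^2$ of positive $\sigma$-measure cannot lie on any great circle, as $y_0,y_1$ range over $\{|f|>0\}$ the available shifts $v$ have $\reals^3$ as their $\reals$-linear span.

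Iterating the translation relation extends the additive law to the subgroup of $\reals^3$ generated by these shifts; combined with measurability of $\theta$, standard arguments for measurable additive homomorphisms into $\reals/2\pi\integers$ then force $\theta(\eta)\equiv\eta\cdot\xi+\theta_0\pmod{2\pi}$ for some $\xi\in\reals^3$ and $\theta_0\in\reals$. Consequently $\phi(x)\equiv x\cdot\xi+c$ on $\{|f|>0\}$, so $f(x)=e^{ic}e^{ix\cdot\xi}|f(x)|$, which is the asserted representation with $F=|f|$ a nonnegative extremizer. The principal obstacle is the last step: converting an a.e.\ mod-$2\pi$ functional equation into a genuine affine identity on $\reals^3$, which requires a careful measurability/group-theoretic argument in the spirit of the classical solution of Cauchy's functional equation, and use of the smoothness of $|f|$ supplied by Theorem \ref{thm:smoothness} to produce regular representatives of $\phi$ and $\theta$.
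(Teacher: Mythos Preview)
Your first two paragraphs are correct and coincide with the paper's opening: extremality of $f$ forces $F=|f|$ to be a nonnegative extremizer and yields the functional equation $e^{i[\phi(x)+\phi(y)]}=e^{i\theta(x+y)}$ for $(\sigma\times\sigma)$-a.e.\ $(x,y)$. (The paper also invokes Corollary~\ref{cor:nonvanishing} to know that $F>0$ everywhere, so that this holds on all of $S^2\times S^2$ rather than only on $\{|f|>0\}^2$; you should use this too.)

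The gap is in the third paragraph. From $\phi(x)+\phi(y_0)=\theta(x+y_0)$ and $\phi(x)+\phi(y_1)=\theta(x+y_1)$ you correctly deduce $\theta(x+y_0)-\theta(x+y_1)=\phi(y_0)-\phi(y_1)$, but this is an identity in the single variable $x\in S^2$: setting $\eta=x+y_0$, it says $\theta(\eta)-\theta(\eta-v)$ is constant for $\sigma$-a.e.\ $\eta$ on the two-sphere $S^2+y_0$, not for Lebesgue-a.e.\ $\eta$ in the three-ball $B(0,2)$. Moreover the constant is $\phi(y_0)-\phi(y_0-v)$, which a priori depends on $y_0$ and not only on $v$. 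The spheres $S^2+y_0$ (as $y_0$ varies with $y_0,y_0-v\in S^2$) are Lebesgue-null, they overlap only along circles, and the a.e.\ nature of the relations prevents you from simply matching constants on these overlaps. The appeal to ``iterating the translation relation'' and to ``standard arguments for measurable additive homomorphisms'' does not address this; those arguments presume the difference relation on an open set of $\eta$, which is exactly what is missing.

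This passage---from a relation on the $4$-manifold $S^2\times S^2$ to an additivity statement for $\theta$ on an open piece of the $9$-manifold $\Lambda=\{z_1+z_2=z_3+z_4\}\subset B(0,2)^4$---is the heart of the proof, and the paper devotes real work to it. It introduces an auxiliary variety $\Omega\subset(S^2)^8$ consisting of $(\vec x,\vec y)$ with $x_1+x_2=y_3+y_4$ and $x_3+x_4=y_1+y_2$, shows that the map $\pi(\vec x,\vec y)=(x_j+y_j)_{j=1}^4$ is a submersion onto $\Lambda$ at generic points (Lemma~\ref{lemma:submerse}), and checks algebraically (Lemma~\ref{lemma:psiadditive}) that membership of the eight pairs $(x_j,y_j)$, $(x_1,x_2)$, $(x_3,x_4)$, $(y_1,y_2)$, $(y_3,y_4)$ in the ``good'' set forces $e^{i[\theta(z_1)+\theta(z_2)-\theta(z_3)-\theta(z_4)]}=1$. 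The submersion transfers this to $\lambda$-a.e.\ $\vec z$ in an open set of $\Lambda$, after which a short distribution-theoretic argument (Lemma~\ref{lemma:separatevariables}) yields $e^{i\theta(z)}=ce^{iz\cdot\xi}$. Your outline would need a comparable mechanism to cross the dimension gap; as written it does not supply one.
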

Thus all complex-valued extremizers are $C^\infty$,
as well.

\begin{theorem} \label{thm:complexsequences}
If $\{f_\nu\}$ is any complex-valued
extremizing sequence, then there exists a sequence
$\{\xi_\nu\}\subset\reals^3$
such that $\{e^{-ix\cdot\xi_\nu}f_\nu(x)\}$ is precompact.
\end{theorem}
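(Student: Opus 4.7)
My plan is to reduce matters to the precompactness of nonnegative extremizing sequences established in \cite{christshao1}, by means of a defect identity comparing $f_\nu\sigma*f_\nu\sigma$ to $|f_\nu|\sigma*|f_\nu|\sigma$ from which a sequence of approximate characters is extracted to supply the modulations $\xi_\nu$.

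First, the pointwise inequality $|f\sigma*f\sigma|\le|f|\sigma*|f|\sigma$, combined with $\norm{\widehat{f\sigma}}_4^2=(2\pi)^{3/2}\norm{f\sigma*f\sigma}_2$, gives $\norm{\widehat{f\sigma}}_4\le\norm{\widehat{|f|\sigma}}_4$. Since $\norm{|f_\nu|}_2=\norm{f_\nu}_2$, the sequence $\{|f_\nu|\}$ is itself a nonnegative extremizing sequence, and by \cite{christshao1} a subsequence converges in $\lt(S^2)$ to a nonnegative extremizer $G$, which is smooth by Theorem~\ref{thm:smoothness}.

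Next I would extract a defect identity. Writing $f_\nu=u_\nu|f_\nu|$ with $|u_\nu|=1$ almost everywhere (extending unimodularly off the support of $f_\nu$), direct expansion against the $\delta(x_1+x_2-x_3-x_4)$ kernel, combined with the elementary identity $1-\Re w=\tfrac12|1-w|^2$ for $|w|=1$, gives
\[
\norm{|f_\nu|\sigma*|f_\nu|\sigma}_2^2-\norm{f_\nu\sigma*f_\nu\sigma}_2^2
=\tfrac12\!\int\!\prod_{j=1}^{4}|f_\nu|(x_j)\,\bigl|u_\nu(x_1)u_\nu(x_2)-u_\nu(x_3)u_\nu(x_4)\bigr|^2\,\delta(x_1{+}x_2{-}x_3{-}x_4)\!\prod_{j=1}^4\! d\sigma(x_j).
\]
The left side tends to $0$ since both $\norm{\widehat{f_\nu\sigma}}_4$ and $\norm{\widehat{|f_\nu|\sigma}}_4$ approach $\scriptr\norm{G}_2$. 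Replacing each factor $|f_\nu|(x_j)$ on the right by $G(x_j)$---the error being controlled by $\norm{|f_\nu|-G}_2\to0$ via $\norm{h_1\sigma*h_2\sigma}_2\le\Sbest^2\norm{h_1}_2\norm{h_2}_2$---yields equivalently that
\[
\norm{\widehat{u_\nu G\sigma}}_{L^4(\reals^3)}\longrightarrow\norm{\widehat{G\sigma}}_{L^4(\reals^3)}=\scriptr\norm{G}_2,
\]
so that $\{u_\nu G\}$ is itself a complex extremizing sequence, but now with $|u_\nu G|=G$ pinned to a fixed nonnegative extremizer.

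The main obstacle is a stability statement: for any such sequence one must find $\xi_\nu\in\reals^3$ and $|c_\nu|=1$ with $c_\nu e^{-ix\cdot\xi_\nu}u_\nu G\to G$ in $\lt(S^2)$. Exact equality in the defect identity would force the multiplicative Cauchy equation $u_\nu(x_1)u_\nu(x_2)=u_\nu(x_3)u_\nu(x_4)$ whenever $x_1+x_2=x_3+x_4$ on the support of $G$, which by the rigidity underlying Theorem~\ref{thm:complexextremizers} forces $u_\nu=c_\nu e^{ix\cdot\xi_\nu}$. For the stable version I would pursue either (a) choosing $\xi_\nu$ as a near-maximizer of $\xi\mapsto|\widehat{u_\nu G\sigma}(\xi)|$ and using near-sharpness of the $L^4$ extension inequality to force tightness of $\widehat{u_\nu G\sigma}$ near $\xi_\nu$, yielding strong convergence after translation; or (b) a compactness-and-contradiction argument that passes to a weak-$*$ limit of $u_\nu$ in $L^\infty$ and invokes Theorem~\ref{thm:complexextremizers} on the limit to preclude any defect. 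Once this is in hand, the decomposition
\[
c_\nu e^{-ix\cdot\xi_\nu}f_\nu=c_\nu e^{-ix\cdot\xi_\nu}u_\nu\cdot G+c_\nu e^{-ix\cdot\xi_\nu}u_\nu\cdot(|f_\nu|-G)
\]
converges to $G$ in $\lt(S^2)$---the first summand by the stability, the second because $|c_\nu e^{-ix\cdot\xi_\nu}u_\nu|\le1$ and $\norm{|f_\nu|-G}_2\to0$---and passing to a convergent subsequence of the unimodular scalars $c_\nu$ delivers the asserted precompactness of $\{e^{-ix\cdot\xi_\nu}f_\nu\}$.
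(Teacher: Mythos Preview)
Your defect identity is correct, and once the $|f_\nu|$ factors are replaced by $G$ and one uses that $G$ is bounded below by a positive constant (Corollary~\ref{cor:nonvanishing}), it gives
\[
\int_{(S^2)^4}\bigl|u_\nu(x_1)u_\nu(x_2)-u_\nu(x_3)u_\nu(x_4)\bigr|^2\,\delta(x_1{+}x_2{-}x_3{-}x_4)\,\prod_{j}d\sigma(x_j)\longrightarrow 0,
\]
which is equivalent to the paper's Lemma~\ref{lemma:fromphitopsi}: $e^{i[\varphi_\nu(x)+\varphi_\nu(x')]}$ is approximated by some $e^{i\psi_\nu(x+x')}$ off a set of vanishing $\sigma\times\sigma$--measure. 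To this point your argument is valid and is a clean alternative to the paper's contradiction proof of that lemma.

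The gap is precisely the step you call the ``main obstacle'', and neither sketch closes it. Route~(b) fails as stated: for $u_\nu(x)=e^{ix\cdot\xi_\nu}$ with $|\xi_\nu|\to\infty$ one has $u_\nu\to 0$ weak-$*$ in $L^\infty(\sigma)$, so taking a weak limit \emph{before} extracting the modulation discards all information, and Theorem~\ref{thm:complexextremizers} says nothing about the zero function. Route~(a) is the right instinct but is not an argument. The heuristic ``large $L^4$ norm forces a large Fourier value, then tightness'' is exactly what the paper makes rigorous in Proposition~\ref{prop:character}, but that proposition works for functions on a disk in $\reals^d$, where $\widehat{e^{i\psi}}\in L^2$ and one can interpolate $\norm{\widehat f}_4^4\le\norm{\widehat f}_\infty^2\norm{\widehat f}_2^2$; on the sphere side $\widehat{u_\nu G\sigma}\notin L^2(\reals^3)$, and no such inequality is available. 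The paper bridges this by Proposition~\ref{prop:quantitative}, which uses the eight-point variety $\Omega\subset(S^2)^8$ and the submersion Lemma~\ref{lemma:submerse} to transfer the approximate functional equation from $S^2\times S^2$ to one for $\psi_\nu$ on the solid ball $B(0,2)\subset\reals^3$; only then does the Fourier argument on the disk extract $\xi_\nu$. Your defect identity can replace Lemma~\ref{lemma:fromphitopsi}, but you still need a device of this sort to pass from $S^2$ to an open set in $\reals^3$ before the approximate character can be identified.
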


\section{Smoothness of critical points}
\label{section:smoothness}

For $\alpha\in(0,1)$ denote by $\Lambda_\alpha$ the space of all
H\"older continuous functions of order $\alpha$ on $S^2$,
with norm
\begin{equation}
\norm{f}_{\Lambda_\alpha}
= \norm{f}_{C^0}
+ \sup_{x\ne x'} |x-x'|^{-\alpha}|f(x)-f(x')|.
\end{equation}
$H^s=H^s(S^2)$ will denote the usual Sobolev space of
functions having $s\ge 0$ derivatives in $\lt$.
$H^0$ will be synonymous with $\lt$.

\begin{lemma}
For any $s\ge 0$
there exists a constant $A_s<\infty$ such that
for any functions $h_j\in H^s(S^2)$,
\begin{equation} \label{usetogetcontraction}
\norm{(h_1\sigma*h_2\sigma*h_3\sigma)\big|_{S^2} }_{H^s}
\le A_s\norm{h_1}_{H^s}\norm{h_2}_{H^s}\norm{h_3}_{H^s}.
\end{equation}

Moreover, for $s$ in any compact subinterval of $[0,\infty)$,
\eqref{usetogetcontraction} holds with a constant $A$ independent of $s$.
A corresponding bound holds in the spaces $\Lambda_\alpha$
for all $0\le\alpha<1$, with a constant independent of $\alpha$.
\end{lemma}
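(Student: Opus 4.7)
The plan is to prove the estimate at $s=0$ directly from the adjoint restriction inequality \eqref{inequalityR}, extend to integer $s$ via a Leibniz rule for rotation vector fields, pass to arbitrary real $s\ge 0$ by interpolation, and handle the H\"older spaces by an independent rotation argument. For $s=0$, I pair $(h_1\sigma*h_2\sigma*h_3\sigma)|_{S^2}$ against a test function $f\in\lt(S^2)$. Parseval gives
\begin{equation}
\bigl\langle(h_1\sigma*h_2\sigma*h_3\sigma)|_{S^2},\,f\bigr\rangle_{\lt(S^2)}
= c\int_{\reals^3}\widehat{h_1\sigma}\,\widehat{h_2\sigma}\,\widehat{h_3\sigma}\,\overline{\widehat{f\sigma}}\,d\xi,
\end{equation}
and H\"older's inequality with four $L^4$ factors combined with \eqref{inequalityR} bounds this by $\scriptr^4\|f\|_2\prod_j\|h_j\|_2$. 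Duality then yields \eqref{usetogetcontraction} at $s=0$.

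For integer $s=n\ge 1$ I introduce the rotation vector fields $X_{ij}=x_i\partial_j-x_j\partial_i$ on $\reals^3$, which are tangent to every sphere centered at the origin. Two properties are central: rotation invariance of $\sigma$, combined with the vanishing of the surface divergence of $X_{ij}$ on $S^2$, gives $X_{ij}(h\sigma)=(X_{ij}h)\sigma$ as distributions on $\reals^3$; and because $X_{ij}$ generates a one-parameter group of Euclidean isometries, a short integration-by-parts computation establishes the derivation identity $X_{ij}(u*v)=(X_{ij}u)*v+u*(X_{ij}v)$. The norm $\|h\|_{H^n(S^2)}$ is equivalent, for each nonnegative integer $n$, to $\bigl(\sum_{|I|\le n}\|X^I h\|_{\lt(S^2)}^2\bigr)^{1/2}$, where $I$ ranges over multi-indices in $X_{12},X_{13},X_{23}$. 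Iterating the derivation identity expands $X^I[(h_1\sigma*h_2\sigma*h_3\sigma)|_{S^2}]$ into at most $3^{|I|}$ terms of the form $[(X^{I_1}h_1)\sigma*(X^{I_2}h_2)\sigma*(X^{I_3}h_3)\sigma]|_{S^2}$ with $I_1+I_2+I_3=I$; applying the $s=0$ bound to each summand and summing produces \eqref{usetogetcontraction} at $s=n$, with a constant that grows at worst geometrically in $n$. Trilinear complex interpolation between consecutive integers then covers all real $s\ge 0$, with constants bounded on any compact subinterval of $[0,\infty)$.

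For the H\"older spaces, the $C^0$ bound follows from the fact that $\sigma*\sigma*\sigma\in L^\infty(\reals^3)$, a consequence of the standard explicit formula $\sigma*\sigma(y)=2\pi|y|^{-1}$ on $\{|y|\le 2\}$. For the H\"older seminorm, given $\omega,\omega'\in S^2$ I choose $R\in SO(3)$ with $R\omega=\omega'$ and $\|R-I\|\le C|\omega-\omega'|$. Rotation invariance of $\sigma$ then gives
\begin{equation}
(h_1\sigma*h_2\sigma*h_3\sigma)(R\omega)=\bigl((R^*h_1)\sigma*(R^*h_2)\sigma*(R^*h_3)\sigma\bigr)(\omega)
\end{equation}
with $(R^*h)(x)=h(Rx)$. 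Telescoping the resulting difference into three terms and combining the $C^0$ bound with $\|h_j-R^*h_j\|_{C^0}\le\|h_j\|_{\Lambda_\alpha}\|R-I\|^\alpha\le C\|h_j\|_{\Lambda_\alpha}|\omega-\omega'|^\alpha$ yields the H\"older estimate with a constant independent of $\alpha\in[0,1)$. The main technical obstacle is bookkeeping the dependence on $s$ through the interpolation step to verify uniformity on compact subintervals; once the derivation identity for the $X_{ij}$ is in place the rest of the argument is essentially mechanical.
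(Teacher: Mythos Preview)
The paper does not actually prove this lemma; it states immediately after the lemma that ``The proofs of these routine inequalities are left to the reader.'' Your proposal therefore cannot be compared to the paper's argument, but it supplies precisely the kind of proof the authors evidently had in mind and is correct in all essentials. The $s=0$ step via duality, H\"older in $L^4$, and \eqref{inequalityR} is the standard route; the passage to integer $s$ using the rotation vector fields $X_{ij}$, the identity $X_{ij}(h\sigma)=(X_{ij}h)\sigma$ (valid because $X_{ij}$ is tangent to $S^2$ and $\sigma$ is rotation-invariant), and the derivation property $X_{ij}(u*v)=(X_{ij}u)*v+u*(X_{ij}v)$ is exactly the mechanism the authors exploit elsewhere in \S\ref{section:smoothness} when they work with $\Theta f$ and $(\Theta-I)$; and the H\"older argument via a rotation $R$ taking $\omega$ to $\omega'$ mirrors the paper's own use of $O(3)$ in the $\scripth^s$ analysis. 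The one point worth a sentence of extra care is the interpolation step: since the trilinear operator is fixed and only the norms vary, standard multilinear complex interpolation between $H^n$ and $H^{n+1}$ (or, equivalently, characterizing $H^s(S^2)$ through spherical-harmonic weights and interpolating directly) gives the intermediate $s$ with constants controlled by those at the endpoints, which is what yields uniformity on compact subintervals.
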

\noindent
The proofs of these routine inequalities are left to the reader.

The following is one of two main steps in the proof of
Theorem~\ref{thm:smoothness}.
\begin{lemma}  \label{lemma:trilinearHs}
Let $a:S^2\to\complex$ be any complex-valued function
which is H\"older continuous of some positive order.
Then for
any solution $f\in H^0(S^2)$ of the equation
\begin{equation} \label{eulerlagrangegeneralized}
f(x) = a(x)(f\sigma*f\sigma*f\sigma)(x) \text{ for almost every } x\in S^2,
\end{equation}
there exists $s>0$ such that $f\in H^s(S^2)$.

Let
$\{f_\nu\}$ be a family
of solutions of \eqref{eulerlagrangegeneralized}
with coefficient functions $a=a_\nu$.
If $\norm{f_\nu}_{\lt}=1$ for all $\nu$,
if the functions $a_\nu$ have uniformly bounded
$\Lambda_\alpha$ norms for some $\alpha>0$,
and if $\{f_\nu\}$ is precompact in $\lt(S^2)$,
then there exist $B<\infty$ and $s>0$
such that $\norm{f_\nu}_{H^s}\le B$ uniformly for all $\nu$.
\end{lemma}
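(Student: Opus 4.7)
The plan is to split $f$ into a smooth part and a small $L^2$ remainder, and to recover the remainder in a Sobolev space $H^s$ with small $s>0$. Fix $\eta>0$ small, and write $f=f^\sharp+f^\flat$ where $f^\sharp$ is a finite linear combination of spherical harmonics (so $f^\sharp\in H^t$ for every $t\ge 0$) and $\|f^\flat\|_{L^2}<\eta$. Substituting into \eqref{eulerlagrangegeneralized} and expanding by trilinearity of the symmetric operator $T(h_1,h_2,h_3):=(h_1\sigma*h_2\sigma*h_3\sigma)|_{S^2}$, the equation becomes
\[
(I-M)f^\flat \;=\; G + Q(f^\flat),
\]
where $Mg:=3aT(f^\sharp,f^\sharp,g)$ is linear, $G:=aT(f^\sharp,f^\sharp,f^\sharp)-f^\sharp$, and $Q$ collects the quadratic and cubic terms in $f^\flat$. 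Since $f^\sharp\in H^t$ for all $t$, the bound \eqref{usetogetcontraction} gives $T(f^\sharp,f^\sharp,f^\sharp)\in H^t$ for all $t$; combined with the fact that multiplication by $a\in\Lambda_\alpha$ preserves $H^s$ for $s<\alpha$ (a paraproduct estimate), this yields $G\in H^s$ for every $s\in(0,\alpha)$.

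The key technical input is a mixed trilinear smoothing estimate: for some $s\in(0,\alpha)$, for smooth $h_1,h_2$ on $S^2$, and for $h_3\in L^2(S^2)$,
\[
\big\|T(h_1,h_2,h_3)\big\|_{H^s(S^2)} \;\le\; C(h_1,h_2)\,\|h_3\|_{L^2}.
\]
This should follow from the stationary-phase decay $|\widehat{h_j\sigma}(\xi)|\lesssim\langle\xi\rangle^{-1}$ for smooth $h_j$ (using the nonvanishing curvature of $S^2$) combined with the adjoint restriction bound $\|\widehat{h_3\sigma}\|_{L^4(\reals^3)}\lesssim\|h_3\|_{L^2}$ from \eqref{inequalityR}: the product $\widehat{h_1\sigma}\widehat{h_2\sigma}\widehat{h_3\sigma}$ decays like $\langle\xi\rangle^{-2}$ times an $L^4$ factor, placing $h_1\sigma*h_2\sigma*h_3\sigma$ in $H^{s'}(\reals^3)$ for some $s'>1/2$, and the trace theorem $H^{s'}(\reals^3)\to H^{s'-1/2}(S^2)$ then yields a genuine $H^s(S^2)$ gain.

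Granting this estimate, $M$ maps $L^2\to H^s$ and hence is compact as a map $H^s\to H^s$, so $I-M$ is Fredholm on $H^s$; triviality of its kernel, propagated via the same mixed estimate, yields invertibility. A contraction argument on a small $H^s$-ball, valid once $\eta$ is small enough that $Q$ has small operator bound on the ball, then produces $f^\flat\in H^s$ from $(I-M)f^\flat=G+Q(f^\flat)$, and by uniqueness of the small-$L^2$ solution this Sobolev-regular solution must coincide with the original $f^\flat$; hence $f\in H^s$. The main obstacle is establishing the mixed trilinear smoothing estimate together with the accompanying inversion of $I-M$, which is what distinguishes the $L^2\to H^s$ bootstrap from the purely $H^s\to H^s$ inequality \eqref{usetogetcontraction}. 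For the uniform assertion, precompactness of $\{f_\nu\}$ in $L^2$ permits a single $f^\sharp$ to serve as a common approximant of every $f_\nu$ to within $\eta$, so all constants in the argument depend only on $\sup_\nu\|a_\nu\|_{\Lambda_\alpha}$, on $\eta$, and on $f^\sharp$, giving the uniform bound $\|f_\nu\|_{H^s}\le B$.
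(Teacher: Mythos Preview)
Your decomposition $f=f^\sharp+f^\flat$ and the plan to recover $f^\flat$ in some $H^s$ by a fixed-point argument match the paper's strategy, and the mixed smoothing estimate you propose (two smooth inputs, one $L^2$ input, output gains regularity) is real; the paper uses essentially the same bound, in H\"older form, when it estimates the forcing term.

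The genuine gap is the step ``triviality of its kernel, propagated via the same mixed estimate, yields invertibility.'' Smoothing gives no such conclusion: if $h\in H^s$ satisfies $h=3a\,T(f^\sharp,f^\sharp,h)$, the mixed estimate only says $h$ is again in $H^s$, not that $h=0$. The $L^2\to L^2$ norm of $M$ is of order $\|a\|_\infty\|f^\sharp\|_{L^2}^2\sim 1$, so a Neumann series is unavailable, and nothing in the hypotheses rules out the eigenvalue $1$. Even granting invertibility of $I-M$, your contraction step still needs the forcing $(I-M)^{-1}G$ to sit in a small $H^s$-ball containing $f^\flat$; but $G$ has no smallness in $H^s$, and you only know $\|f^\flat\|_{L^2}<\eta$, not $\|f^\flat\|_{H^s}$.

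The paper sidesteps both problems by different bookkeeping. It does \emph{not} treat the linear-in-$g$ term as an operator to invert; instead it plugs the already-known $L^2$ function $g_\eps=f^\flat$ into that term and absorbs it into a known forcing
\[
\scriptl \;=\; -\varphi_\eps + a\,T(\varphi_\eps,\varphi_\eps,\varphi_\eps) + 3a\,T(\varphi_\eps,\varphi_\eps,g_\eps).
\]
Now $\scriptl$ obeys two bounds: a (possibly huge) H\"older bound $\|\scriptl\|_{\Lambda_\alpha}\le C(\eps)$ from the mixed estimate, and a \emph{smallness} bound $\|\scriptl\|_{L^2}\le C\eps$, because the equation itself gives $\scriptl=g_\eps-\scriptn(\varphi_\eps,g_\eps)$. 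Interpolating, one chooses $s=s(\eps)>0$ small enough that $\|\scriptl\|_{H^{s}}<\eps^{7/8}$ and simultaneously $\|\varphi_\eps\|_{H^{s}}\le\eps^{-1/4}$. With these choices the map $h\mapsto \scriptl+\scriptn(\varphi_\eps,h)$ is a strict contraction on the ball of radius $\eps^{3/4}$ in $H^{s(\eps)}$, using only the trilinear $H^s$ bound \eqref{usetogetcontraction}; no Fredholm inversion or kernel analysis is needed. The exponent $s$ therefore depends on $f$ through $\|\varphi_\eps\|_{\Lambda_\alpha}$, which is exactly the non-universality the paper flags. Your proposal is missing this interpolation idea, and that is what closes the argument.
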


Note that precompactness in $H^0$ is a hypothesis for the
second part of the lemma, not a conclusion. In an earlier paper we have
proved that nonnegative extremizing sequences for the functional
$\norm{\widehat{f\sigma}}_{\lt}^4/\norm{f}_{\lt}^4$
are precompact, but we have not established any corresponding result
for arbitrary critical
points satisfying the Euler-Lagrange equation with uniformly bounded constant
Lagrange multipliers $a$.

The functional
$\norm{\widehat{f\sigma}}_{\lt}^4/\norm{f}_{\lt}^4$
is essentially scale-invariant
at small scales.
Therefore it is not true that for any $f\in H^0(S^2)$,
$(f\sigma*f\sigma*f\sigma)\big|_{S^2}\in H^s$ for some $s>0$.
Thus a straightforward bootstrapping argument cannot establish
the smoothness of all solutions. But
any particular solution is not scale-invariant, and therefore
breaks the (approximate) scaling symmetry.
Because any solution breaks the symmetry in its own way,
the proof yields an exponent $s$ which is not universal, but
depends on the critical point itself.

\begin{proof}
Let $f\in \lt(S^2)$ satisfy the equation for
some function $a\in \Lambda_\alpha(S^2)$.
For any $\eps\in (0,1]$,
$f$ may be decomposed
as $f=\varphi_\eps+g_\eps$ where
$\varphi_\eps\in C^\infty$,
$\norm{g_\eps}_{\lt}<\eps$,
and $\norm{\varphi_\eps}_{\lt}\le C\norm{f}_{\lt}$,
where $C<\infty$ is independent of $\eps$.

Reformulate the equation by substituting $f=\varphi_\eps+g_\eps$
for all four occurrences of $f$. Express the result in the form
\begin{equation}
g_\eps = \scriptl(\varphi_\eps,g_\eps) +\scriptn(\varphi_\eps,g_\eps)
\end{equation}
where
\begin{align}
\scriptl(\varphi_\eps,g_\eps) &= -\varphi_\eps
+ a\cdot(\varphi_\eps\sigma*\varphi_\eps\sigma*\varphi_\eps\sigma)
+ 3a\cdot(\varphi_\eps\sigma*\varphi_\eps\sigma*g_\eps\sigma)
\\
\scriptn(\varphi_\eps,g_\eps)
&= 3a\cdot(\varphi_\eps\sigma*g_\eps\sigma*g_\eps\sigma)
+ a\cdot(g_\eps\sigma*g_\eps\sigma*g_\eps\sigma).
\end{align}
$\scriptl(\varphi_\eps,g_\eps)$ and
$\scriptn(\varphi_\eps,g_\eps)$ are regarded as elements of $\lt(S^2)$,
rather than of $\lt(\reals^3)$.

For the ``linear'' term $\scriptl(\varphi_\eps,g_\eps)$ there are two useful bounds.
Firstly,
\begin{equation}
\norm{\scriptl(\varphi_\eps,g_\eps)}_{\Lambda_\alpha}\le \norm{\varphi_\eps}_{\Lambda_\alpha}
+ C\norm{\varphi_\eps}_{\Lambda_\alpha}^3
+ C \norm{\varphi_\eps}_{\Lambda_\alpha}^2
\norm{g_\eps}_{\lt}^2
\end{equation}
where $C$ depends on $\norm{a}_{\Lambda_\alpha}$.
$\Lambda_\alpha$ embeds continuously in $H^\alpha$,
so
$\scriptl(\varphi_\eps,g_\eps)\in H^\alpha$  and
\begin{equation}
\norm{\scriptl(\varphi_\eps,g_\eps)}_{H^\alpha} \le C(\eps)\text{for all $\eps>0$,}
\end{equation}
where $C(\eps)<\infty$ but we have no useful upper bound.
Secondly,  since
\begin{equation}
\norm{\scriptn(\varphi_\eps,g_\eps)}_{\lt(S^2)}
\le C\norm{\varphi_\eps}_{\lt}\norm{g_\eps}_{\lt}^2
+ C\norm{g_\eps}_{\lt}^3,
\end{equation}
the representation $\scriptl(\varphi_\eps,g_\eps)= g_\eps - \scriptn(\varphi_\eps,g_\eps)$ gives
\begin{equation}
\norm{\scriptl(\varphi_\eps,g_\eps)}_{H^0} \le \norm{g_\eps}_{H^0}
+ C\norm{g_\eps}_{H^0}^2 + C\norm{g_\eps}_{H^0}^3
\le C\eps.
\end{equation}

A consequence is that if $\eps$ is first chosen
to be sufficiently small, and if $s(\eps)>0$ is subsequently chosen to be sufficiently
small as a function of $\norm{\varphi_\eps}_{H^\alpha}$, which in turn depends on $\eps$,
then
\begin{equation}
\norm{\scriptl(\varphi_\eps,g_\eps)}_{H^{s(\eps)}}<\eps^{7/8}.
\end{equation}
This is obtained by interpolating between the favorable $H^0$ bound,
and the potentially unfavorable $H^\alpha$ bound.
Since $\norm{\varphi_\eps}_{H^0}$ is bounded above uniformly in $\eps$,
by choosing first $\eps$ small, then $s(\eps)$ sufficiently small
we may ensure in the same way that
\begin{equation} \label{varphinotsobad}
\norm{\varphi_\eps}_{H^{s(\eps)}}\le \eps^{-1/4}.
\end{equation}

For each $\eps\in(0,1]$ define the operator
\begin{equation}
L_\eps(h) =
\scriptl(\varphi_\eps,g_\eps) + \scriptn(\varphi_\eps,h)
\end{equation}
for $h\in\lt(S^2)$.
$L_\eps$ maps $H^s(S^2)$ continuously to itself for all $s\in[0,\alpha]$,
by Lemma~\ref{lemma:trilinearHs}.

Denote by $B= B(\scriptl(\varphi_\eps,g_\eps),\eps^{3/4})$
the ball  of radius $\eps^{3/4}$ in $H^{s(\eps)}(S^2)$ centered at $\scriptl(\varphi_\eps,g_\eps)$.
By \eqref{usetogetcontraction}
and the bounds
$\norm{\scriptl(\varphi_\eps,g_\eps)}_{H^{s(\eps)}}<\eps^{7/8}$
and $\norm{\varphi_\eps}_{H^s(\eps)}<\eps^{-1/4}$,
if $\eps$ is sufficiently small
then $L_\eps$
maps $B$ to itself,
and is a strict contraction on $B$.
Indeed, if $\scriptn(\varphi_\eps,h)-\scriptn(\varphi_\eps,\tilde h)$ is expanded
in the natural way, then a typical term of the worst type which results is
$a\cdot\big(\varphi_\eps\sigma*h\sigma*(h-\tilde h)\sigma\big)$.
For $s=s(\eps)$,
its $H^{s}$ norm is majorized by
\[
C\norm{\varphi_\eps}_{H^s}\norm{h}_{H^s}\norm{h-\tilde h}_{H^s}
\le C\eps^{-1/4}\eps^{3/4}
\norm{h-\tilde h}_{H^s}
\ll
\norm{h-\tilde h}_{H^s}.
\]

Therefore for any sufficiently small $\eps>0$
there exists a solution $h_\eps\in H^{s(\eps)}$
of $h_\eps= L_\eps(h_\eps)$,
satisfying $\norm{h_\eps}_{H^{s(\eps)}}\le \eps^{3/4}$.
Moreover, there exists only one solution satisfying this norm bound.
The same reasoning applies, and therefore the
same uniqueness holds, with $H^{s(\eps)}$ replaced by $H^0$.
Since the $H^{s(\eps)}$ norm majorizes the $\lt$ norm,
if $\eps$ is sufficiently
small then
$h_\eps$ is also the unique $H^{0}$ solution with small $H^{0}$ norm.
We know that $g_\eps$ is a solution with small $H^0$ norm,
so $g_\eps=h_\eps$,
and thus $g_\eps\in H^{s(\eps)}$.
Specializing to any single such value of $\eps$
gives the first conclusion of the lemma.

This argument suffices to establish the uniform version
stated above, as well. Since $\{f_\nu\}$
is precompact, $f_\nu$ may be decomposed as $f_\nu=\varphi_\nu+g_\nu$
where $\varphi_\nu,g_\nu$ depend also on $\eps$ and satisfy
$\norm{g_\nu}_{\lt}<\eps$
and $\norm{\varphi}_{C^1}\le C_\eps$,
where $C_\eps<\infty$ is independent of $\nu$.
The proof then proceeds as above, with all quantities uniform in $\nu$.
\end{proof}

The second main step in the proof of regularity is a routine
bootstrapping procedure.
We have found it to be convenient to carry this procedure out in the following
function spaces $\scripth^s$.
For $0\le s\notin\integers$,
write $C^s=C^{k,\alpha}$
for $s\in (k,k+1)$ for each nonnegative integer $k$.
Then to $f\in\lt(S^2)$ associate
$F(\Theta,x)$ defined by
$F(\Theta,x) = f(\Theta(x)) = (\Theta f)(x)$
for $(\Theta,x)\in O(3)\times S^2$.
For $0\le s\notin\integers$
define $\scripth^s$ to be the set
of all $f\in\lt(S^2)$ whose lift $F$ belongs to $C^s_\Theta L^2_x(O(3)\times S^2)$.
The norm for this space is
\begin{equation}
\norm{f}_{\scripth^s}
=
\norm{f}_{\lt(S^2)} +
\sup_{\Theta\ne I}
|\Theta-I|^{-s}\norm{\Theta f-f}_{\lt(S^2)},
\end{equation}
where $|\Theta-I|$ denotes the distance from $\Theta$
to the identity matrix, with respect to any fixed metric on $O(3)$.

Of course, the mappings $f\mapsto\Theta(f)$
map $\scripth^s$ boundedly to $\scripth^s$,
uniformly for all $\Theta\in O(3)$, for all $s$.
\begin{lemma}
For any $\eps>0$ there exists $\delta>0$
such that $f\sigma*g\sigma*h\sigma\in \scripth^\delta$
whenever $f,g\in\scripth^\eps$ and $h\in H^0$,
with
\begin{equation}
\norm{f\sigma*g\sigma*h\sigma}_{\scripth^\delta}
\le C_{\eps}\norm{f}_{\scripth^\eps}
\norm{g}_{\scripth^\eps}
\norm{h}_{H^0}.
\end{equation}
\end{lemma}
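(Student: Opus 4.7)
The plan is to verify both components of the $\scripth^\delta$ norm. Set $T(f,g,h):=(f\sigma*g\sigma*h\sigma)|_{S^2}$. The $L^2$ bound $\|T(f,g,h)\|_{L^2(S^2)}\le C\|f\|_{L^2}\|g\|_{L^2}\|h\|_{L^2}$ is the case $s=0$ of the preceding lemma. For the rotational H\"older piece of the $\scripth^\delta$ norm, the rotation invariance of $\sigma$ gives $\Theta T(f,g,h)=T(\Theta f,\Theta g,\Theta h)$, whence
\[\Theta T(f,g,h)-T(f,g,h)=T(\Theta f-f,\Theta g,\Theta h)+T(f,\Theta g-g,\Theta h)+T(f,g,\Theta h-h).\]
The first two terms are bounded by $C|\Theta-I|^\eps\|f\|_{\scripth^\eps}\|g\|_{\scripth^\eps}\|h\|_{L^2}$ via the trilinear $L^2$ estimate together with $\|\Theta f-f\|_{L^2}\le|\Theta-I|^\eps\|f\|_{\scripth^\eps}$. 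The essential difficulty is the third term, since $h\in L^2$ carries no a priori rotational smoothness.

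I would handle the third term via a dyadic spherical-harmonic Littlewood--Paley decomposition $h=\sum_N P_N h$, where $P_N$ projects onto harmonics of degree in $[N,2N)$. Two estimates enter for each dyadic block $N$: the trivial bound $\|T(f,g,\Theta P_N h-P_N h)\|_{L^2}\le C\min(1,N|\Theta-I|)\|f\|_{L^2}\|g\|_{L^2}\|P_N h\|_{L^2}$ (using $\|\Theta P_N h-P_N h\|_{L^2}\le C\min(1,N|\Theta-I|)\|P_N h\|_{L^2}$ for $P_N h$ at rotational frequency $N$), and a bilinear smoothing bound
\[\|T(f,g,P_N h)\|_{L^2}\le CN^{-a}\|f\|_{\scripth^\eps}\|g\|_{\scripth^\eps}\|P_N h\|_{L^2}\]
for some $a=a(\eps)>0$. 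Splitting the dyadic sum at the threshold $N_*=|\Theta-I|^{-1}$ (using the trivial bound for $N<N_*$ and the smoothing bound for $N\ge N_*$), and summing via Cauchy--Schwarz in $N$ (permissible because $\sum_N\|P_N h\|_{L^2}^2\lesssim\|h\|_{L^2}^2$), yields the desired modulus-of-continuity estimate with $\delta:=\frac{1}{2}\min(a,1)$.

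The hard step, which I expect to be the main obstacle, is establishing the bilinear smoothing. I would prove it by Fourier analysis: writing
\[|\langle T(f,g,P_N h),\phi\rangle_{L^2(\sigma)}|=(2\pi)^{-3}\Big|\int_{\reals^3}\widehat{f\sigma}(\xi)\,\widehat{g\sigma}(\xi)\,\widehat{(P_N h)\sigma}(\xi)\,\widehat{\phi\sigma}(\xi)\,d\xi\Big|,\]
one applies H\"older with weight $(1+|\xi|)^{\pm a}$ to split into two factors. The factor $\|\widehat{(P_N h)\sigma}(1+|\xi|)^{-a}\widehat{\phi\sigma}\|_{L^2(\reals^3)}$ gains $N^{-a}$ via the spherical Bessel representation $\widehat{Y_\ell\sigma}(\xi)=c\,i^\ell j_\ell(|\xi|)Y_\ell(\xi/|\xi|)$: since $|j_\ell(r)|\lesssim(r/\ell)^\ell$ for $r\le\ell/2$ and $|j_\ell(r)|\lesssim\min(\ell^{-1},r^{-1})$ for $r\gtrsim\ell$, the Fourier mass of $(P_N h)\sigma$ is essentially confined to $|\xi|\gtrsim N$, so the weight $(1+|\xi|)^{-a}$ contributes $\lesssim N^{-a}$ there, while the remaining $\widehat{\phi\sigma}$ is $L^4$-bounded by Stein--Tomas. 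The factor $\|\widehat{f\sigma}\widehat{g\sigma}(1+|\xi|)^a\|_{L^2}$ equals $(2\pi)^{3/2}\|f\sigma*g\sigma\|_{H^a(\reals^3)}$, which I would control by $C\|f\|_{H^\eps}\|g\|_{H^\eps}$ for $a$ sufficiently small depending on $\eps$, using the stationary-phase decay $|\widehat{f\sigma}(\xi)|\lesssim(1+|\xi|)^{-1}$ for smooth $f$ (Littman's lemma) together with interpolation down to $f\in H^\eps(S^2)\simeq\scripth^\eps$.
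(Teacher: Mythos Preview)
Your strategy is sound and ultimately works, but it is considerably more elaborate than the paper's argument, and there is a small computational slip.

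\medskip
\textbf{Comparison with the paper.} The paper never invokes the equivariance identity $\Theta T(f,g,h)=T(\Theta f,\Theta g,\Theta h)$, and so never faces your ``hard'' third term at all. Instead it writes
\[
(h\sigma*f\sigma*g\sigma)(z)=\int_{S^2}h(y)\,(f\sigma*g\sigma)(z-y)\,d\sigma(y),
\]
observes that for Lipschitz $f,g$ the convolution $f\sigma*g\sigma$ equals a $\Lambda_{1/2}$ function times $|x|^{-1}\chi_{|x|\le 2}$, and concludes that the linear map $h\mapsto T(f,g,h)$ is a weakly singular integral operator on $S^2$ with kernel $K(z,y)|z-y|^{-1}$, $K\in\Lambda_{1/2}$. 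Such operators map $L^2(S^2)$ into $\scripth^\delta$ by a routine argument. General $f,g\in\scripth^\eps$ are then handled by decomposing each as a Lipschitz function plus an $L^2$-small remainder and interpolating. This avoids spherical harmonics, Bessel asymptotics, and the Littlewood--Paley machinery entirely. Your approach, by contrast, proves essentially the same bilinear regularity of $f\sigma*g\sigma$ (in the guise of $\|f\sigma*g\sigma\|_{H^a(\reals^3)}\lesssim\|f\|_{\scripth^\eps}\|g\|_{\scripth^\eps}$), but deploys it through a frequency decomposition of $h$ rather than through the kernel representation. Both routes rest on the same underlying phenomenon; the paper's is shorter because the rotation difference $(\Theta-I)$ acts on the kernel variable $z$ rather than being distributed onto $h$.

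\medskip
\textbf{The slip.} With your stated threshold $N_*=|\Theta-I|^{-1}$, the Cauchy--Schwarz sum $\sum_N c_N^2$ with $c_N=\min(N|\Theta-I|,\,N^{-a})$ is dominated by the low-frequency block and gives only $O(1)$, no gain. The crossover of the two bounds occurs at $N\sim|\Theta-I|^{-1/(1+a)}$; splitting there yields $(\sum_N c_N^2)^{1/2}\lesssim|\Theta-I|^{a/(1+a)}$, which is what you need. (Your announced exponent $\tfrac12\min(a,1)$ corresponds to the non-optimal split at $|\Theta-I|^{-1/2}$, which also works.) This is a bookkeeping error, not a conceptual one.
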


\begin{proof}
Write for $z\in\reals^3$
\begin{equation}
(h\sigma*f\sigma*g\sigma)(z)
= \int_{S^2} h(y) (f\sigma*g\sigma)(z-y)\,d\sigma(y).
\end{equation}
Therefore for $\Theta\in O(3)$,
\begin{multline} \label{Theta-I}
(\Theta-I)(h\sigma*f\sigma*g\sigma)(z)
\\
= \int_{S^2} h(y)
\Big((f\sigma*g\sigma)(\Theta(z)-y)-(f\sigma*g\sigma)(z-y)\Big)\,d\sigma(y).
\end{multline}
If $f,g$ are Lipschitz functions on $S^2$
then $f\sigma*g\sigma(x)$ is the product of a function
in $\Lambda_{1/2}(\reals^3)$
of $x$ with $|x|^{-1}\chi_{|x|\le 2}$.
When  \eqref{Theta-I}
is calculated for $z\in S^2$, only $y$ satisfying $|y|\le 2$ come into
play.
Thus this integral takes the form
\begin{equation}
\int_{S^2}K(z,y)|z-y|^{-1}h(y)\,d\sigma(y)
\end{equation}
where $K\in\Lambda_{1/2}(S^2\times S^2)$.
It is routine to verify that such a
linear transformation maps $\lt(S^2)$
to $\scripth^\delta$
for some $\delta>0$.

If $f\in \scripth^\eps$ then
for any $\eta>0$,
$f$ may be decomposed as $f=f^\sharp+f^\flat$
where $\norm{f^\flat}_{H^0}\le\eta$
and $\norm{f^\sharp}_{\text{Lip} 1}
\le C\eta^{-C}$,
where $C=C(\eps)<\infty$.
From this and the above result for
Lipschitz $f,g$ it follows that
for all $f,g\in \scripth^\eps$ and $h\in\lt$,
$(\Theta-I)(h\sigma*f\sigma*g\sigma)\in\scripth^\delta$
for a smaller exponent $\delta=\delta(\eps)>0$.
This concludes the proof for $s\in (0,1)$.

For $s=k+\alpha$ with $\alpha\in (0,1)$,
we first differentiate $F(\Theta,x)$ $k$
times with respect to $\Theta$, then invoke the
case $\alpha\in (0,1)$ for each of the resulting terms.
\end{proof}

\begin{lemma}
Let $a\in C^\infty(S^2)$.
For any $\eps>0$ there exists $\delta>0$
such that for any
$s\in [\eps,\infty)\setminus\integers$ and
any function
$f\in \scripth^s(S^2)$,
\begin{equation}
a\cdot(f\sigma*f\sigma*f\sigma)\Big|_{S^2}
\in\scripth^{t}(S^2)
\text{ for all } t\in[0,s+\delta]\setminus\integers.
\end{equation}
\end{lemma}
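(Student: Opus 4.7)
The plan is to combine rotation equivariance of the trilinear $\sigma$-convolution with the preceding lemma and a Zygmund-type second-difference argument. Since $a\in C^\infty(S^2)$, pointwise multiplication by $a$ preserves every $\scripth^t$-space uniformly, so it suffices to show $V:=(f\sigma*f\sigma*f\sigma)|_{S^2}\in\scripth^{s+\delta}$. Because $\sigma$ is $O(3)$-invariant, the trilinear $\sigma$-convolution commutes with rotation: $\Theta V=((\Theta f)\sigma*(\Theta f)\sigma*(\Theta f)\sigma)|_{S^2}$ for every $\Theta\in O(3)$.

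Expanding trilinearly,
\[
\Theta V-V=\sum_{i=1}^3 (u_{i,1}\sigma*u_{i,2}\sigma*u_{i,3}\sigma)\big|_{S^2},
\]
where for each $i$ exactly one of $u_{i,1},u_{i,2},u_{i,3}$ is $\Theta f-f$ and the other two are $\Theta f$ or $f$. Since $f\in\scripth^s\subset\scripth^\epsilon$, rotations preserve the $\scripth^\epsilon$-norm, and $\|\Theta f-f\|_{L^2}\le|\Theta-I|^s\|f\|_{\scripth^s}$, the preceding lemma applied to each summand yields
\[
\|\Theta V-V\|_{\scripth^\delta}\le C\|f\|_{\scripth^s}^3\,|\Theta-I|^s
\]
uniformly in $\Theta$.

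The $\delta$ gain in the target exponent is extracted by a second-difference step. For any second rotation $\Phi\in O(3)$, the definition of the $\scripth^\delta$-norm applied to $\Theta V-V$ gives
\[
\|(\Phi-I)(\Theta V-V)\|_{L^2}\le|\Phi-I|^\delta\cdot C|\Theta-I|^s,
\]
and specializing $\Phi=\Theta$ produces the second-difference estimate $\|\Theta^2V-2\Theta V+V\|_{L^2}\le C|\Theta-I|^{s+\delta}$. The standard Zygmund-class characterization, applied along one-parameter subgroups of $O(3)$ to the $L^2(S^2)$-valued map $\Theta\mapsto\Theta V$ and then lifted to general $\Theta$ by $O(3)$-covariance, upgrades this second-difference bound to the first-difference bound $\|\Theta V-V\|_{L^2}\le C|\Theta-I|^{s+\delta}$ whenever $s+\delta$ is non-integer and lies in $(0,2)$. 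This is precisely $V\in\scripth^{s+\delta}$; by embedding, $V\in\scripth^t$ for all $t\in[0,s+\delta]\setminus\integers$.

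For $s>1$, where the Zygmund upgrade does not apply in a single step, I would proceed by induction on $\lfloor s\rfloor$. Given $f\in\scripth^s$ with $s>1$ and an infinitesimal rotation generator $X_j$, one has $X_jf\in\scripth^{s-1}$, and rotation equivariance together with the product rule yield $X_jV=3\,((X_jf)\sigma*f\sigma*f\sigma)|_{S^2}$. A routine generalization of the above argument to trilinear inputs of mixed regularity $\scripth^{s-1}\times\scripth^s\times\scripth^s$---placing the least regular factor $X_jf$ in the $H^0$ slot of the preceding lemma---yields $X_jV\in\scripth^{s-1+\delta}$, hence $V\in\scripth^{s+\delta}$. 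The principal obstacle is the Zygmund upgrade on the non-abelian group $O(3)$, which requires some care when $s+\delta$ crosses an integer threshold; the gained exponent $\delta$ is the one furnished by the preceding lemma and depends only on $\epsilon$.
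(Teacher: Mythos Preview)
Your argument is correct and follows essentially the same route as the paper: expand $(\Theta-I)V$ trilinearly using rotation equivariance, apply the preceding lemma to obtain $\|(\Theta-I)V\|_{\scripth^\delta}\le C|\Theta-I|^s\|f\|_{\scripth^s}^3$, pass to the second-difference bound $\|(\Theta-I)^2V\|_{H^0}\le C|\Theta-I|^{s+\delta}$, and invoke the second-difference characterization of H\"older classes of non-integer order in $(0,2)$. Your handling of the case $s>1$ via $X_j$-derivatives and induction on $\lfloor s\rfloor$ is the natural elaboration of the paper's brief remark about differentiating in $\Theta$.
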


\begin{proof}
Consider $s=\alpha\in(0,1)$.
The factor $a(x)$ is harmless.
We write $f\sigma*f\sigma*f\sigma$
as shorthand for
$(f\sigma*f\sigma*f\sigma)\Big|_{S^2}$,
where convenient.
For $\Theta\in O(3)$,
\begin{equation}
\big(\Theta-I\big)(f\sigma*f\sigma*f\sigma)
=
(\Theta-I)(f)\sigma*\Theta f\sigma*\Theta f\sigma
+
f\sigma*(\Theta-I)f\sigma*\Theta f\sigma
+
f\sigma*f\sigma*(\Theta-I)f\sigma.
\end{equation}
Now for
$\delta>0$ sufficiently small,
\begin{equation}
\norm{(\Theta-I)f\sigma*f\sigma*f\sigma}_{\scripth^\delta}
\le C
\norm{(\Theta-I)f}_{H^0}\norm{f}_{\scripth^s}^2
\le C|\Theta-I|^s \norm{f}_{\scripth^s}^3.
\end{equation}
The same applies to the other two terms, so
\begin{equation}
\norm{(\Theta-I)\big(f\sigma*f\sigma*f\sigma\big)}_{\scripth^\delta}
\le C|\Theta-I|^s \norm{f}_{\scripth^s}^3.
\end{equation}
Therefore
\begin{equation}
\norm{(\Theta-I)^2\big(f\sigma*f\sigma*f\sigma\big)}_{H^0}
\le C|\Theta-I|^{s+\delta} \norm{f}_{\scripth^s}^3.
\end{equation}
By the classical characterization
of H\"older spaces of orders in $(0,1)\cup(1,2)$ in terms
of second differences, this implies that
$(f\sigma*f\sigma*f\sigma)\in \scripth^{s+\delta}$.
\end{proof}

We finish by establishing another property of nonnegative extremizers.
\begin{lemma} \label{lemma:strictlypositive}
Let $a\in C^0(S^2)$ satisfy $a(x)>0$ for all $x\in S^2$.
Let $f\in C^0(S^2)$ be any continuous, nonnegative, even
solution of
$f = a\cdot(f\sigma*f\sigma*f\sigma)\big|_{S^2}$
which does not vanish identically.
Then $f(x)>0$ for  every $x\in S^2$.
\end{lemma}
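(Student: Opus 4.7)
Let $E := \{x \in S^2 : f(x) > 0\}$. Since $f$ is continuous and not identically zero, $E$ is a nonempty open subset of $S^2$; the evenness of $f$ gives $E = -E$. The goal is to show $E = S^2$. First I would translate the equation $f = a\cdot(f\sigma*f\sigma*f\sigma)\big|_{S^2}$ into a combinatorial condition on $E$. Writing $\phi := f\sigma*f\sigma*f\sigma$, the strict positivity of $a$ yields $E = \{x \in S^2 : \phi(x) > 0\}$. The value $\phi(x)$ is the integral of the nonnegative product $f(y_1)f(y_2)f(y_3)$ against the natural measure on the three-dimensional fiber $M_x = \{(y_1,y_2,y_3) \in (S^2)^3 : y_1 + y_2 + y_3 = x\}$; by continuity of $f$ and openness of $E$, this integral is strictly positive precisely when $M_x$ meets $E^3$. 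Letting $\Sigma$ denote the sum map, we obtain the fixed-point identity
\[
E = \Sigma(E^3) \cap S^2.
\]

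The heart of the argument is a geometric amplification estimate. Fix $y_0 \in E$ and suppose that $E$ contains the open spherical cap $C_\rho$ of angular radius $\rho \in (0,\pi/2)$ around $y_0$; by evenness $E$ also contains the antipodal cap $-C_\rho$ around $-y_0$. Taking $y_1, y_2 \in C_\rho$ and $y_3 \in -C_\rho$, the $y_0$-component of $y_1+y_2+y_3$ ranges down to $2\cos\rho - 1$ (attained with $y_1,y_2$ on the boundary of $C_\rho$ and $y_3 = -y_0$), while the component perpendicular to $y_0$ can be freely oriented by rotating the individual caps. Subject to the constraint $y_1+y_2+y_3 \in S^2$, a short computation shows that the reachable sums cover the entire cap around $y_0$ of angular radius $r(\rho) := \arccos(2\cos\rho - 1)$, and $r(\rho) > \rho$ for all $\rho \in (0,\pi/2)$. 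By symmetry the same expansion holds around $-y_0$, so the fixed-point identity forces $E \supseteq C_{r(\rho)} \cup (-C_{r(\rho)})$.

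Iterating with $\rho_{n+1} = r(\rho_n)$ starting from any $\rho_0 > 0$ with $C_{\rho_0} \subseteq E$ produces a strictly increasing sequence with no fixed point in $(0,\pi/2)$, and by compactness of any subinterval $[\rho_0,\pi/2]$ the increments $r(\rho)-\rho$ are bounded below, so $\rho_n$ must exceed $\pi/2$ after finitely many steps. At that stage the two antipodal caps $C_{\rho_n}$ and $-C_{\rho_n}$ cover $S^2$, and we conclude $E = S^2$, which proves the lemma. The principal technical obstacle is the amplification estimate itself, and it crucially uses evenness via the cap around $-y_0$: without that cap, sums of three points in a small neighborhood of $y_0$ cluster near $3y_0$ and cannot return to $S^2$ at all, so evenness is precisely what makes the expansion possible.
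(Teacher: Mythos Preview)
Your argument is correct and follows a genuinely different route from the paper's. You set up the fixed-point identity $E=\Sigma(E^3)\cap S^2$ and then prove a quantitative cap-expansion step: if $C_\rho(y_0)\cup C_\rho(-y_0)\subseteq E$ then $C_{r(\rho)}(y_0)\cup C_{r(\rho)}(-y_0)\subseteq E$ with $r(\rho)=\arccos(2\cos\rho-1)>\rho$, and iterate until the antipodal caps cover $S^2$. The paper instead factors the triple convolution as $(f\sigma*f\sigma)*f\sigma$ and observes that evenness makes $f\sigma*f\sigma$ strictly positive in a neighborhood of $0\in\reals^3$; this yields $f\ge (f\sigma*K)\big|_{S^2}$ for some nonnegative $K\in C^0(\reals^3)$ with $K(0)>0$, and then a one-line open--closed argument on the connected manifold $S^2$ finishes. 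Both proofs use evenness for exactly the same purpose (to place mass near the origin via $x_0+(-x_0)=0$), but the paper's version is shorter because it never computes anything on the sphere: the kernel $K$ does all the work. What your approach buys is an explicit, dimension-free expansion rate, which is not needed here but would be relevant for effective or perturbative statements. One small point to tighten in your write-up: the implication ``$M_x$ meets $E^3\Rightarrow\phi(x)>0$'' requires that the natural measure on $M_x$ charge nonempty open subsets, which fails exactly at the singular points where $y_1,y_2,y_3$ are mutually parallel; your explicit construction with $y_3=-y_0$ and $(y_i)_z=(c+1)/2$ already lands at regular points for $c<1$, so this is harmless, but it is worth a sentence.
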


\begin{proof}
There exists $x_0\in S^2$ for which $f(x_0)>0$.
Since $f(-x_0)=f(x_0)$, $f$ is continuous, and $f\ge 0$ everywhere,
this forces there to exist
a neighborhood of $0$ in which $f\sigma*f\sigma$ is uniformly bounded below
by some strictly positive number.
Therefore $a\cdot(f\sigma*f\sigma*f\sigma)\ge f\sigma *K$
for some nonnegative function $K\in C^0(\reals^3)$ which
satisfies $K(0)>0$.
The inequality $f\ge f\sigma*K$ forces $f>0$ everywhere.
\end{proof}

\begin{corollary} \label{cor:nonvanishing}
For any
nonnegative extremizer $0\ne f\in\lt(S^2)$
of the functional $\norm{\widehat{f\sigma}}_4^4/\norm{f}_{\lt}^4$
there exists $\delta>0$
such that $f(x)\ge\delta$ for almost every $x\in S^2$.
\end{corollary}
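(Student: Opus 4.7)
The plan is to reduce the corollary to Lemma~\ref{lemma:strictlypositive} by first proving that any nonnegative extremizer must be antipodally symmetric, and then upgrading pointwise positivity to a uniform lower bound via compactness. Theorem~\ref{thm:smoothness} supplies all the regularity needed at the outset: $f\in C^\infty(S^2)$, and in particular $\widehat{f\sigma}$ is continuous.

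To prove evenness I would decompose $f = f_e + f_o$ into its even and odd parts with respect to $x\mapsto -x$. Because $f_e$ is real and even while $f_o$ is real and odd, $\widehat{f_e\sigma}$ is real-valued and $\widehat{f_o\sigma}$ is purely imaginary, whence $|\widehat{f\sigma}|^2 = |\widehat{f_e\sigma}|^2 + |\widehat{f_o\sigma}|^2$ pointwise. Expanding the fourth power, applying Cauchy--Schwarz to the resulting cross term, and then invoking \eqref{inequalityR} separately for $f_e$ and for $f_o$ gives the chain
\begin{equation*}
\norm{\widehat{f\sigma}}_4^4 \le \bigl(\norm{\widehat{f_e\sigma}}_4^2 + \norm{\widehat{f_o\sigma}}_4^2\bigr)^2 \le \Sbest^4\bigl(\norm{f_e}_2^2 + \norm{f_o}_2^2\bigr)^2 = \Sbest^4\norm{f}_2^4.
\end{equation*}
Since $f$ is an extremizer, equality must hold throughout. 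Suppose for contradiction that $f_o\not\equiv 0$; then saturation of Cauchy--Schwarz, combined with continuity, yields $|\widehat{f_e\sigma}(\xi)|^2 = c\,|\widehat{f_o\sigma}(\xi)|^2$ for all $\xi\in\reals^3$ and some constant $c > 0$. Evaluating at $\xi=0$ produces the contradiction: the antipodal invariance of $\sigma$ gives $\widehat{f_o\sigma}(0) = \int f_o\,d\sigma = 0$, while $\widehat{f_e\sigma}(0) = \int f\,d\sigma > 0$ since $f$ is nonnegative and nontrivial. Hence $f_o\equiv 0$ and $f$ is even.

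With $f$ continuous, nonnegative, even, and satisfying the Euler--Lagrange equation \eqref{eulerlagrange} with the positive constant $a = \Sbest^{-4}\norm{f}_2^{-2}$, Lemma~\ref{lemma:strictlypositive} yields $f(x) > 0$ for every $x\in S^2$. Continuity of $f$ on the compact manifold $S^2$ then produces $\delta := \min_{x\in S^2} f(x) > 0$, the desired lower bound. The main obstacle is the evenness step in the middle, which is the only place where the extremizer hypothesis (as opposed to mere criticality) is actually used; once evenness is secured, the remaining pieces follow immediately from results already in hand.
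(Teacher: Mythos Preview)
Your proof is correct and follows exactly the paper's route: smoothness from Theorem~\ref{thm:smoothness}, evenness, Lemma~\ref{lemma:strictlypositive}, and finally compactness of $S^2$ to upgrade pointwise positivity to a uniform lower bound. The paper simply cites \cite{christshao1} for the evenness of nonnegative extremizers rather than reproving it, so your self-contained argument via the even/odd decomposition is a bonus; note only the harmless slip that the optimal constant in \eqref{inequalityR} is $\scriptr$, not $\Sbest$.
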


Indeed, it was proved in \cite{christshao1}
that any such extremizer is necessarily an even function.
It was shown above that $f\in C^\infty$.
Thus the hypotheses of Lemma~\ref{lemma:strictlypositive}
are satisfied.

\section{Complex-valued extremizers}

\begin{proof}[Proof of Theorem~\ref{thm:complexextremizers}]
Denote by $B(0,2)$ the ball centered at the origin
of radius $2$ in $\reals^3$.
Let $0\ne f\in\lt(S^2)$
be a complex extremizer and write
\begin{equation}f=e^{i\varphi}F \end{equation}
where $\varphi$ is real-valued and measurable, and $F=|f|$
is a nonnegative extremizer.
Trivially $|(f\sigma*f\sigma)(z)|\le (F\sigma*F\sigma)(z)$
for almost every $z\in\reals^3$.
By Corollary~\ref{cor:nonvanishing}, $(F\sigma*F\sigma)(z)>0$
for almost every $z\in B(0,2)$, and of course $\equiv 0$
whenever $|z|>2$.
Therefore $f$ is an extremizer if and only if
\begin{equation}
|(f\sigma*f\sigma)(z)| = (F\sigma*F\sigma)(z)
\ \ \text{for almost every $z\in B(0,2)$.}
\end{equation}

For any $z\in\reals^3$ satisfying $0<|z|<2$,
there exists a singular positive measure $\mu_z$ on
$S^2\times S^2$, supported on $\{(x,y): x+y=z\}$,
satisfying
\begin{equation}
(h_1\sigma*h_2\sigma)(z) = \int h_1(x)h_2(y)\,d\mu_z(x,y)
\end{equation}
for arbitrary $h_1,h_2$.
Moreover,
for almost every $z$,
the relation
$|f\sigma*f\sigma(z)|=(F\sigma*F\sigma)(z)>0$
forces
$e^{i\varphi(x)}e^{i\varphi(y)}$ to depend only on $z$
for $\mu_z$--almost every pair $(x,y)$.
Therefore for $\sigma\times\sigma$--almost every $(x,y)\in S^2$,
\begin{equation}
\text{
$e^{i[\varphi(x)+\varphi(y)]}$ depends only on $x+y$.}
\end{equation}
Therefore
there exists a measurable real-valued function $\psi$, defined
for almost every $z\in B(0,2)$, satisfying
\begin{equation}
(f\sigma*f\sigma)(z)
= e^{i\psi(z)}(F\sigma*F\sigma)(z),
\end{equation}
that is,
\begin{equation} \label{eq:varphileadstotopsi}
e^{i(\varphi(x)+\varphi(y))} = e^{i\psi(x+y)}
\end{equation}
for $\sigma\times\sigma$ almost every $(x,y)\in S^2\times S^2$.

We aim to prove that $\psi$ has the form
$\psi(z)=ce^{iz\cdot\xi}$ for almost every $z\in B(0,2)$,
for some $c\in\complex$ satisfying $|c|=1$
and some $\xi\in\reals^3$.
From \eqref{eq:varphileadstotopsi}
it follows directly that $\varphi$ has the same form, almost everywhere on $S^2$.

\begin{definition}
\begin{equation}
\Lambda=\{\vec{z}=(z_1,z_2,z_3,z_4)\in(\reals^3)^4: z_1+z_2=z_3+z_4\}.
\end{equation}
$\Lambda$ is a smooth manifold of dimension $9$.
$\lambda$ denotes the natural ``surface'' measure on
$\Lambda$ induced from its inclusion
into $(\reals^{3})^4$.
\end{definition}

\begin{lemma} \label{lemma:separatevariables}
Let
$\vec{\bar z}=(\bar z_1,\bar z_2,\bar z_3,\bar z_4)\in\Lambda$.
Suppose that there exists a neighborhood $U\subset\Lambda$
of $\vec{\bar z}$ such that
\begin{equation}
e^{i[\psi(z_1)+\psi(z_2)]}
=e^{i[\psi(z_3)+\psi(z_4)]}
\ \ \text{for $\lambda$-almost every $\vec{z}\in U$.}
\end{equation}
Then there exist $\xi\in\reals^3$
and a constant $c\in\complex$ satisfying $|c|=1$
and a neighborhood $V\subset\reals^3$ of $\bar z_1$
such that for Lebesgue almost every $w\in V$,
\begin{equation}e^{i\psi(w)}=ce^{iw\cdot\xi}.\end{equation}
\end{lemma}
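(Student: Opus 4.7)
The plan is to recognize the hypothesis as a measurable Cauchy-type functional equation on $\reals^3$ and to solve it locally. Set $F(w) = e^{i\psi(w)}$, so $|F|\equiv 1$ and the hypothesis reads
\begin{equation}
F(z_1)F(z_2) = F(z_3)F(z_4) \quad\text{for $\lambda$-a.e.\ }\vec z \in U.
\end{equation}
I would parametrize a neighborhood of $\vec{\bar z}$ in $\Lambda$ by $(z_1, z_2, t) \in (\reals^3)^3$ via $z_3 = z_1 + t$ and $z_4 = z_2 - t$, so the relation becomes $F(z_1+t)F(z_2-t) = F(z_1)F(z_2)$ for almost every $(z_1,z_2,t)$ near $(\bar z_1,\bar z_2,0)$.

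By Fubini, I can choose a point $z_1^\ast$ near $\bar z_1$ for which the identity holds for a.e.\ $(z_2,t)$ in some neighborhood of $(\bar z_2, 0)$. Since $|F|\equiv 1$ the quotient $H(t) := F(z_1^\ast + t)/F(z_1^\ast)$ is well-defined for a.e.\ small $t$, and the identity rewrites as
\begin{equation}
F(z_2) = H(t)\, F(z_2 - t) \quad \text{for a.e.\ }(z_2,t).
\end{equation}
Applying this relation twice, with successive shifts $t$ and $s$, and comparing with the single shift $s+t$, I would obtain (after another invocation of Fubini, using that $F$ is nonvanishing) that $H(s+t) = H(s)H(t)$ for almost every $(s,t)$ in a neighborhood of the origin in $\reals^3 \times \reals^3$.

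The final step is the classical rigidity theorem: any measurable $H$ from a neighborhood of $0$ in $\reals^3$ to the unit circle satisfying $H(s+t)=H(s)H(t)$ almost everywhere locally must agree almost everywhere with $t \mapsto e^{it\cdot\xi}$ for a unique $\xi \in \reals^3$. I would prove this in the standard way by convolving $H$ with a smooth compactly supported bump to obtain a continuous local character on $(\reals^3,+)$, which is necessarily of that form, and then arguing that the original $H$ coincides with it a.e. Substituting $H(t)=e^{it\cdot\xi}$ back into $F(z_2)=H(t)F(z_2-t)$ and specializing to a Lebesgue point $z_2^{0}$ of $F$ gives $F(w) = c\, e^{iw\cdot\xi}$ for a.e.\ $w$ in a neighborhood $V$ of $\bar z_1$, where $c := F(z_2^{0}) e^{-iz_2^{0}\cdot\xi}$ has modulus one, yielding the conclusion $e^{i\psi(w)} = ce^{iw\cdot\xi}$ a.e.

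I anticipate that the only subtle step is the careful bookkeeping of null sets across the repeated applications of Fubini, so as to arrive at a \emph{genuine} almost-everywhere multiplicative equation for $H$ on a full neighborhood of $0$; once $H$ is known to satisfy Cauchy's equation measurably, the passage to $e^{it\cdot\xi}$ and the identification of $F$ are entirely standard.
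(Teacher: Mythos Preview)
Your approach is correct and takes a genuinely different endgame from the paper's. Both routes implicitly begin by reducing to the statement that $F(z)F(w)$ depends only on $z+w$ for almost every $(z,w)$ near $(\bar z_1,\bar z_2)$; note that your parametrization actually places you near $(\bar z_1,\bar z_2,\bar z_3-\bar z_1)$, not $(\bar z_1,\bar z_2,0)$, so this reduction (fix a generic $z_3$ via Fubini and write $F(z_1)F(z_2)=F(z_3)F(z_1+z_2-z_3)$) is needed before the relation $F(z_1+t)F(z_2-t)=F(z_1)F(z_2)$ genuinely holds for $t$ near $0$. From that point the paper proceeds by a short distribution argument: $(\nabla_z-\nabla_w)\big(e^{i\psi(z)}e^{i\psi(w)}\big)=0$ in $\scriptd'$, and testing against tensor products $f(z)g(w)$ with $\langle g,e^{i\psi}\rangle\ne 0$ yields $c_1\nabla e^{i\psi}=c_2 e^{i\psi}$, hence $e^{i\psi}=ce^{iz\cdot\xi}$ directly. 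Your route through the measurable Cauchy equation $H(s+t)=H(s)H(t)$ and the classical rigidity of measurable local characters is more hands-on but entirely valid; it trades the distribution machinery for the extra Fubini bookkeeping you correctly flagged as the delicate point. One small fix at the end: specializing $z_2=z_2^0$ gives $F=ce^{iw\cdot\xi}$ near $\bar z_2$, not $\bar z_1$; to land near $\bar z_1$ simply read off $F(z_1^\ast+t)=F(z_1^\ast)e^{it\cdot\xi}$ from your own definition of $H$ once $H$ has been identified.
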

\noindent This lemma will be proved below.

If for every $\bar w\in B(0,2)$ there exist $c,\xi$ such that
$e^{i\psi(w)}\equiv ce^{iw\cdot\xi}$ for almost every $w$
in some neighborhood of $\bar w$,
then $c,\xi$ must clearly be independent of $\bar w$,
so
$e^{i\psi(w)}\equiv ce^{iw\cdot\xi}$ for almost every $w\in B(0,2)$.
Thus we aim to prove that $\psi$ is additive
in the sense that for every $\bar z_1\in B(0,2)\subset\reals^3$,
there exist $\vec{\bar z}$ and a neighborhood $U$
satisfying the hypothesis of Lemma~\ref{lemma:separatevariables}.

\begin{definition}
$G\subset S^2\times S^2$ is
\begin{equation}
G=\{(x,y)\in S^2\times S^2: x\ne\pm y \text{ and }
e^{i[\varphi(x)+\varphi(y)]}=e^{i\psi(x+y)}\}.
\end{equation}
$\Omega\subset (S^2)^4\times (S^2)^4$  is defined by
\begin{equation}
\Omega=\{
(\vec{x},\vec{y})=(x_1,\cdots,y_4)\in (S^2)^8:
x_1+x_2=y_3+y_4
\text{ and }
x_3+x_4=y_1+y_2\}.
\end{equation}
$\pi:\Omega\to\Lambda$ is  the mapping
\begin{equation}
\pi(\vec{x},\vec{y}) = (x_1+y_1,x_2+y_2,x_3+y_3,x_4+y_4).
\end{equation}
\end{definition}

We know that
\begin{equation}(\sigma\times\sigma)((S^2\times S^2)\setminus G)=0.\end{equation}
$\Omega$ is a $16-6=10$-dimensional real algebraic variety, with singularities.
The two equations defining $\Omega$ ensure that $\pi(\Omega)\subset\Lambda$.
$\Omega$ is equipped
with a natural ``surface'' measure $\rho$
which is supported on the set of all smooth points of $\Omega$,
and is induced from
$\sigma\times\cdots\times\sigma$,
via the inclusion of $\Omega$ into $(S^2)^8$.

\begin{lemma} \label{lemma:psiadditive}
Let $\vec{z}$ in $\Lambda$,
and suppose that there exists
$(\vec{x},\vec{y})\in\Omega$
such that $\pi(\vec{x},\vec{y})=\vec{z}$,
$(x_j,y_j)\in G$ for all  $j\in\{1,2,3,4\}$,
and
$(x_1,x_2),\,
(x_3,x_4),\,
(y_1,y_2),\,
(y_3,y_4) $ all belong to $G$ as well.
Then
\begin{equation}
e^{i[\psi(z_1)+\psi(z_2)]}
=e^{i[\psi(z_3)+\psi(z_4)]}.
\end{equation}
\end{lemma}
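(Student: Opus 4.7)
The plan is to treat this as a purely algebraic identity: the hypotheses provide eight pointwise equalities of unimodular complex numbers, and the two additive constraints built into the definition of $\Omega$ (namely $x_1+x_2=y_3+y_4$ and $x_3+x_4=y_1+y_2$), so no measure-theoretic or analytic step is needed for this lemma. The argument is a symmetric chain of substitutions that folds a sum of four $\varphi$-values into two $\psi$-values, shuffles the arguments using the $\Omega$-constraints, and then unfolds back to two $\psi$-values in the new grouping.

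Concretely, I would first apply $\pi(\vec x,\vec y)=\vec z$ and the memberships $(x_j,y_j)\in G$ for $j=1,2$ to obtain
\[
e^{i[\psi(z_1)+\psi(z_2)]}
= e^{i[\varphi(x_1)+\varphi(y_1)+\varphi(x_2)+\varphi(y_2)]}.
\]
Regrouping the exponent as $(\varphi(x_1)+\varphi(x_2))+(\varphi(y_1)+\varphi(y_2))$ and collapsing each pair via the $G$-memberships of $(x_1,x_2)$ and $(y_1,y_2)$ yields
\[
e^{i[\psi(z_1)+\psi(z_2)]}
= e^{i[\psi(x_1+x_2)+\psi(y_1+y_2)]}.
\]
Invoking the $\Omega$-constraints rewrites the right-hand side as $e^{i[\psi(y_3+y_4)+\psi(x_3+x_4)]}$. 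Reversing the previous steps—expanding each $\psi$ via the $G$-memberships of $(y_3,y_4)$ and $(x_3,x_4)$, regrouping the four resulting $\varphi$-values as $(\varphi(x_3)+\varphi(y_3))+(\varphi(x_4)+\varphi(y_4))$, and collapsing these via $(x_j,y_j)\in G$ for $j=3,4$—delivers $e^{i[\psi(z_3)+\psi(z_4)]}$, which is the desired identity.

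Because all eight membership hypotheses are in force at the particular $(\vec x,\vec y)$ under consideration, there is no exceptional set to avoid and no genuine obstacle in the present lemma; the whole content is the observation that the four $\Omega$-relations are exactly what is required to move the sum of the four $\varphi$'s between the two pairings. The substantive work is deferred to the subsequent step, where one must show that for $\lambda$-almost every $\vec z\in\Lambda$ near a fixed base point there exists a preimage $(\vec x,\vec y)\in\Omega$ under $\pi$ for which all eight of the prescribed pairs fall in $G$. That reduction is a Fubini-type argument exploiting $(\sigma\times\sigma)((S^2\times S^2)\setminus G)=0$ together with the submersion properties of $\pi$ at smooth points of $\Omega$, but it is logically separate from Lemma~\ref{lemma:psiadditive} itself.
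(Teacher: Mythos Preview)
Your proof is correct and follows essentially the same route as the paper's: both arguments unfold each $e^{i\psi(z_j)}$ into $e^{i[\varphi(x_j)+\varphi(y_j)]}$ via $(x_j,y_j)\in G$, regroup the eight $\varphi$-values, and then use the four remaining $G$-memberships together with the two $\Omega$-constraints $x_1+x_2=y_3+y_4$, $y_1+y_2=x_3+x_4$ to match the groupings. The only cosmetic difference is that the paper writes the quotient $e^{i[\psi(z_1)+\psi(z_2)-\psi(z_3)-\psi(z_4)]}$ and shows it equals $1$, whereas you chain from the left-hand side to the right-hand side; the underlying algebra is identical.
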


\begin{proof}
$e^{i\psi(z_j)} = e^{i[\phi(x_j)+\phi(y_j)]}$
for each $j$ by definition of $\psi$ since $(x_j,y_j)\in G$.
Therefore
\begin{align*}
e^{i[\psi(z_1)+\psi(z_2)-\psi(z_3)-\psi(z_4)]}
&=
e^{ i[\phi(x_1)+\phi(y_1)]}
e^{ i[\phi(x_2)+\phi(y_2)]}
e^{-i[\phi(x_3)+\phi(y_3)]}
e^{-i[\phi(x_4)+\phi(y_4)]}
\\
&=
e^{ i[\phi(x_1)+\phi(x_2)]}
e^{-i[\phi(y_3)+\phi(y_4)]}
\cdot
e^{ i[\phi(y_1)+\phi(y_2)]}
e^{-i[\phi(x_3)+\phi(x_4)]}.
\end{align*}
Since $(x_1,x_2)\in G$,
$(y_3,y_4)\in G$,
and $x_1+x_2=y_3+y_4$,
$
e^{ i[\phi(x_1)+\phi(x_2)]}
=
e^{i[\phi(y_3)+\phi(y_4)]}
$.
Similarly
$e^{ i[\phi(y_1)+\phi(y_2)]}
=
e^{i[\phi(x_3)+\phi(x_4)]}
$.
Thus the product equals $1$.
\end{proof}

\begin{lemma}  \label{lemma:submerse}
Suppose that $(\vec{\bar x},\vec{\bar y})\in \Omega$
satisfies
\begin{equation} \label{notparallel}
\begin{aligned}
&\bar x_j\ne\pm \bar y_j \text{ for all $j\in\{1,2,3,4\}$,}
\\
&\bar x_3\ne \pm \bar x_4,\ \
\bar y_3\ne\pm \bar y_4.
\end{aligned}
\end{equation}
Then
$(\vec{\bar x},\vec{\bar y})$
is a smooth point of $\Omega$.

If in addition
\begin{equation} \label{submersehypothesis}
\Span(x_1,y_1)^\perp +\Span(x_2,y_2)^\perp +\Span(x_3,y_3)^\perp +\Span(x_4,y_4)^\perp
=\reals^3,
\end{equation}
then $\pi:\Omega\to\Lambda$ is a submersion at
$(\vec{\bar x},\vec{\bar y})$.
\end{lemma}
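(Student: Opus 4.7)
My plan is to treat both assertions as purely linear-algebra statements about the tangent spaces. For the smoothness claim I parameterize $T_{(\vec{\bar x},\vec{\bar y})}(S^2)^8$ as the $16$-dimensional space of tuples $(\vec v,\vec w)$ with $v_j\perp \bar x_j$ and $w_j\perp \bar y_j$, and I regard the two vector-valued defining relations of $\Omega$ inside $(S^2)^8$ as a single linear map
\[
D\colon T_{(\vec{\bar x},\vec{\bar y})}(S^2)^8\to \reals^3\times\reals^3,\qquad (\vec v,\vec w)\mapsto(v_1+v_2-w_3-w_4,\ v_3+v_4-w_1-w_2).
\]
Smoothness at the point is equivalent to surjectivity of $D$.

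The key observation is that whenever $p,q\in S^2$ satisfy $p\ne\pm q$, one has $p^\perp+q^\perp=\reals^3$, since each summand is a $2$-plane and their intersection $\Span(p,q)^\perp$ is $1$-dimensional. Setting $v_1=v_2=w_1=w_2=0$, the equations $D(\vec v,\vec w)=(A,B)$ reduce to $-w_3-w_4=A$ and $v_3+v_4=B$, which are solvable by this observation applied to the pairs $(\bar y_3,\bar y_4)$ and $(\bar x_3,\bar x_4)$. Hence $D$ is surjective and $\Omega$ is smooth at $(\vec{\bar x},\vec{\bar y})$.

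For the submersion claim I identify
\[
T_{(\vec{\bar x},\vec{\bar y})}\Omega=\{(\vec v,\vec w): v_j\perp \bar x_j,\ w_j\perp \bar y_j,\ v_1+v_2=w_3+w_4,\ v_3+v_4=w_1+w_2\}
\]
and compute $d\pi(\vec v,\vec w)=(v_1+w_1,v_2+w_2,v_3+w_3,v_4+w_4)$. Given $\vec u\in T_{\pi(\vec{\bar x},\vec{\bar y})}\Lambda$, i.e.\ with $u_1+u_2=u_3+u_4$, I substitute $w_j:=u_j-v_j$. Then $w_j\perp \bar y_j$ becomes the scalar condition $v_j\cdot \bar y_j=u_j\cdot \bar y_j$, and both vector constraints of $T\Omega$ collapse to the single equation $\sum_j v_j=u_1+u_2$ (the redundancy comes from $u_1+u_2=u_3+u_4$). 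For each $j$, the constraints $v_j\in \bar x_j^\perp$ and $v_j\cdot \bar y_j=u_j\cdot \bar y_j$ are solvable because $\bar y_j\notin\Span(\bar x_j)$, and the general solution is an affine line in the direction of $\Span(\bar x_j,\bar y_j)^\perp$. Writing $v_j=v_j^0+t_j e_j$ with $0\ne e_j\in\Span(\bar x_j,\bar y_j)^\perp$, the remaining constraint becomes $\sum_j t_j e_j=(u_1+u_2)-\sum_j v_j^0$, which is solvable for every right-hand side precisely when $\sum_j\Span(\bar x_j,\bar y_j)^\perp=\reals^3$; this is exactly hypothesis \eqref{submersehypothesis}. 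I expect no serious obstacle: the only point that merits care is the collapse of the two $T\Omega$-constraints into one, which is forced by the definition of $\Lambda$ and is consistent with $\dim\Omega=10$ and $\dim\Lambda=9$.
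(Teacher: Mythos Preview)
Your proof is correct and follows essentially the same route as the paper's: both parts are treated as linear-algebra statements about tangent spaces, and the submersion claim is reduced to the span condition \eqref{submersehypothesis} by parameterizing the solutions of $u_j+v_j=w_j$ (your $v_j+w_j=u_j$) as affine lines in the direction $\Span(\bar x_j,\bar y_j)^\perp$.

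Two minor differences in bookkeeping are worth noting. For smoothness, the paper eliminates $u_4,v_4$ and counts the dimension of the resulting solution space, whereas you verify surjectivity of the defining differential $D$ directly by setting four of the eight tangent vectors to zero; your version is slightly shorter and makes clear that only the conditions $\bar x_3\ne\pm\bar x_4$, $\bar y_3\ne\pm\bar y_4$ are used at this stage. For the submersion, the paper keeps $u_1,u_2,v_3,v_4$ as the free variables and must then observe separately that the second $\Omega$-constraint is automatic, while you eliminate all four $w_j$'s at once and obtain the collapse of the two $\Omega$-constraints into a single symmetric equation $\sum_j v_j=u_1+u_2$; this symmetrization is a small but pleasant simplification. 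The substance of the two arguments is the same.
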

\noindent This lemma will be proved below.

Let
$(\vec{\bar x},\vec{\bar y})$ satisfy the hypotheses of Lemma~\ref{lemma:submerse}.
Since $\pi$ is a submersion at
$(\vec{\bar x},\vec{\bar y})$,
there exist neighborhoods $U\subset\Omega$
of $(\vec{\bar x},\vec{\bar y})$
and $V\subset\Lambda$ of $\vec{\bar z}=\pi(\vec{\bar x},\vec{\bar y})$
such that
$\pi(U)\supset V$, and moreover,
\begin{equation} \label{mutuallyAC}
\text{
The measures
$(\pi_*(\rho|_{U}))\big|_V$
and $\lambda|_V$
are mutually absolutely continuous.}
\end{equation}
Here $\mu|_E$ denotes the restriction of a measure $\mu$ to a measurable set $E$,
and $\pi_*(\rho|_U)(E) = \rho(U\cap \pi^{-1}(E))$.

Define $\Omega^\natural$ to be the set of all $(\vec{x},\vec{y})\in\Omega$
which  satisfy \eqref{submersehypothesis}
and $x_i\ne\pm x_j\ne\pm y_k$ for all $i,j,k\in\{1,2,3,4\}$
with $i\ne j$,
and for which each pair $(x_j,y_j)$ lies in $G$,
and each of the pairs
$(x_1,x_2),(x_3,x_4),(y_1,y_2),(y_3,y_4)$ also lies in $G$.
In a neighborhood of any point of $\Omega$,
any two of the eight two-dimensional
variables $x_i,y_j$ give $4$ independent coordinates. It follows that
$\rho(\Omega\setminus\Omega^\natural)=0$.
By \eqref{mutuallyAC},
since the image under $\pi$ of a $\rho$-null set is a $\pi_*(\rho)$-null set,
the measures
$(\pi_*(\rho|_{U\cap \Omega^\natural}))\big|_V$
and $\lambda|_V$
are again mutually absolutely continuous.

By Lemma~\ref{lemma:psiadditive},
this implies that for any $(\vec{x},\vec{y})\in\Omega^\natural$,
$e^{i[\psi(\zeta_1)+\psi(\zeta_2)-\psi(\zeta_3)-\psi(\zeta_4)]}=1$
for $\lambda$--almost every $\vec{\zeta}=(\zeta_1,\zeta_2,\zeta_3,\zeta_4)\in\Lambda$
in some neighborhood of $\vec{z} = \pi(\vec{x},\vec{y})\in\Lambda$.

In combination with the next lemma,
this completes the proof of Theorem~\ref{thm:complexextremizers}.
\end{proof}

\begin{lemma} \label{lemma:existenceofpreimages}
For any $(w_1,w_2)\in B(0,2)\times B(0,2)$
with $0<|w_1|,|w_2|<2$
there exists $(\vec{x},\vec{y})\in\Omega^\natural$
satisfying $x_j+y_j=w_j$ for both $j=1$ and $j=2$.
\end{lemma}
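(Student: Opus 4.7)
The plan is to work with the coarea formula for the smooth map
$\Psi:\Omega\to\reals^6$ defined by $\Psi(\vec{x},\vec{y})=(x_1+y_1,\,x_2+y_2)$, whose fibers are the 4-dimensional slices in which we must locate a point of $\Omega^\natural$. A dimension count shows that $\Omega$ has dimension $10$ at its smooth points, and a direct computation of $d\Psi$ restricted to $T\Omega$ shows that $\Psi$ is a submersion wherever the span hypothesis \eqref{submersehypothesis} holds for the relevant indices, in particular on a Zariski-open dense subset of $\Omega$. (One can also parametrize the fiber $\Psi^{-1}(w_1,w_2)$ explicitly by taking $x_1$ on the circle $\{x\in S^2:|x-w_1|=1\}$, $x_2$ on $\{x\in S^2:|x-w_2|=1\}$, and then $y_3,x_3$ on the circles determined by $u=x_1+x_2$ and $v=w_1+w_2-u$; this exhibits the fiber as a smooth $4$-parameter variety on the open nonempty set where $0<|u|,|v|<2$.)

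The core observation is that $\rho(\Omega\setminus\Omega^\natural)=0$. The two algebraic conditions defining $\Omega^\natural$, namely $x_i\ne\pm x_j\ne\pm y_k$ and the span hypothesis, cut out a proper algebraic subvariety, which is $\rho$-null. Each of the eight $G$-membership conditions is pulled back from $S^2\times S^2$ along one of the coordinate projections $\Omega\to S^2\times S^2$ (for instance $(\vec{x},\vec{y})\mapsto (x_1,y_1)$ or $(\vec{x},\vec{y})\mapsto (y_3,y_4)$); each such projection is easily seen to be a submersion at generic points of $\Omega$, its differential having full rank $4$ on $T\Omega$. Since a smooth submersion pulls back null sets to null sets, the preimage of $G^c$ under each of these eight projections is $\rho$-null, and hence so is their union. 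Applying the coarea formula to $\Psi$ with integrand $\mathbf{1}_{\Omega\setminus\Omega^\natural}$ yields
\[
0=\rho(\Omega\setminus\Omega^\natural)=\int_{\reals^6}\rho_w\bigl(\Psi^{-1}(w)\setminus\Omega^\natural\bigr)\,d\mu(w)
\]
for a Lebesgue-equivalent measure $\mu$ on the image and induced fiber measures $\rho_w$, forcing $\Omega^\natural\cap\Psi^{-1}(w)$ to have full $\rho_w$-measure, and in particular be nonempty, for Lebesgue-almost every $w=(w_1,w_2)$ with $0<|w_j|<2$.

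The main obstacle is closing the gap between ``almost every'' and the ``for any'' appearing in the statement. The difficulty is intrinsic to the definition of $\Omega^\natural$: $G$ is defined via the a.e.-defined function $\psi$ on $B(0,2)$, and the two pairs $(x_1,y_1)$ and $(x_2,y_2)$ are constrained by $w_1$ and $w_2$ alone to lie on fixed 1-dimensional circles in $S^2\times S^2$, so a Lebesgue-null exceptional set of $(w_1,w_2)$ on which $G$ misses these circles cannot be ruled out a priori. Fortunately, density of the good $(w_1,w_2)$ is already enough for the downstream use in the proof of Theorem~\ref{thm:complexextremizers}: Lemma~\ref{lemma:separatevariables} then furnishes the representation $e^{i\psi(w)}=ce^{iw\cdot\xi}$ on a dense open family of neighborhoods in $B(0,2)$, and the forced agreement of $(c,\xi)$ on overlaps propagates the formula to all of $B(0,2)$.
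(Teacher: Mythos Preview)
Your approach is genuinely different from the paper's, and you have been more scrupulous about one point than the paper itself.

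The paper's proof is a short, direct geometric construction: given $(w_1,w_2)$, it chooses $(x_1,y_1)$ on the circle $\{x+y=w_1\}\subset S^2\times S^2$, then $(x_2,y_2)$ on the circle $\{x+y=w_2\}$ generically so as to avoid the finitely many forbidden values $\pm x_1,\pm y_1$, then $(y_3,y_4)$ on the circle $\{y_3+y_4=x_1+x_2\}$ avoiding the previously chosen points, and finally $(x_3,x_4)$ likewise. This verifies only the algebraic constraints (the $\ne\pm$ conditions, and implicitly the span hypothesis, which is also generic on these circles); it does not address the $G$-membership conditions at all. In the paper's logical flow this is harmless, because the role of the lemma is only to furnish, for each $w_1$, a point satisfying the hypotheses of Lemma~\ref{lemma:submerse}; the $G$-conditions are then recovered for $\lambda$-almost every nearby $\vec z$ via the mutual absolute continuity argument and $\rho(\Omega\setminus\Omega^\natural)=0$ already established in the main text.

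Your route instead packages everything measure-theoretically: you invoke $\rho(\Omega\setminus\Omega^\natural)=0$ together with the coarea formula for $\Psi(\vec x,\vec y)=(x_1+y_1,x_2+y_2)$ to conclude that $\Omega^\natural$ meets $\Psi^{-1}(w_1,w_2)$ for almost every $(w_1,w_2)$. You then correctly observe that the literal ``for any'' in the statement cannot be recovered from this, precisely because the circles $\{x+y=w_j\}$ are $\sigma\times\sigma$-null and $G$ is only defined up to null sets --- and you note that density suffices for the downstream application via Lemma~\ref{lemma:separatevariables}. This is a legitimate and self-contained resolution. What the paper's constructive argument buys is that one gets, for \emph{every} $(w_1,w_2)$, a point where $\pi$ is a submersion, so that Lemma~\ref{lemma:separatevariables} applies in a neighborhood of every $\bar z_1$ rather than merely a dense set of them; what your argument buys is an honest accounting of the $G$-conditions, which the paper's proof of this lemma elides.
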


\section{Proofs of auxiliary lemmas}

\begin{proof}[Proof of Lemma~\ref{lemma:existenceofpreimages}]
The set of all solutions $(x_1,y_1)\in (S^2)^2$
of $x_1+y_1=w_1$ is a certain circle,
and the condition $0<|w_1|<2$ ensures that
$x_1\ne\pm y_1$ for all such points.
There is a corresponding circle of points $(x_2,y_2)$ satisfying
$x_2+y_2=w_2$, and once $(x_1,y_1)$ has been specified,
any generic pair of this type
satisfies $x_2,y_2\ne\pm x_1,y_1$.
Once $(x_1,x_2,y_1,y_2)$ are specified, the pairs $(y_3,y_4)$
which satisfy $y_3+y_4=x_1+x_2$ form another circle,
and again, any generic point of this circle
satisfies the constraints $y_3,y_4\notin\{\pm x_1,\pm x_2,\pm y_1,\pm y_2\}$.
Finally $(x_3,x_4)$ may also be chosen in the same way to satisfy
$x_3,x_4\notin\{\pm x_1,\pm x_2,\pm y_j\}$.
\end{proof}

\begin{proof}[Proof of Lemma~\ref{lemma:separatevariables}]
It suffices to prove the following:
Let $\psi$ be a real-valued measurable function in two nonempty open
sets $U,V\subset\reals^d$. Suppose that
$e^{i[\psi(z)+\psi(w)]}$ equals a function
of $z+w$ alone for Lebesgue-almost every $(z,w)\in U\times V$.
Then there exist $\xi\in\reals^d$
and $c\in\complex$ such that
$e^{i\psi(z)}\equiv ce^{iz\cdot\xi}$ for almost every $z\in U$.

Given any two distributions in $\scriptd'(U\times V)$ which
depend respectively only on $z,w$ in the natural sense,
their product is well-defined as a distribution.
Moreover
\begin{equation}
(\nabla_z-\nabla_w)\big(e^{i\psi(z)+i\psi(w)}\big)
=
e^{i\psi(w)}\cdot
\nabla e^{i\psi(z)}
-e^{i\psi(z)} \cdot \nabla e^{i\psi(w)}
\end{equation}
in the sense of distributions.
The hypothesis that $e^{i\psi(z)}e^{i\psi(w)}$
depends only on $z+w$ means that the left-hand
side vanishes identically, as a distribution.
By pairing the right-hand side with test functions
$f(z)g(w)$
and fixing any $g\in\scriptd(V)$ such that $\langle g,e^{i\psi}\rangle\ne 0$,
we conclude that there exist $c_1,c_2\in\complex$ with $c_1\ne 0$ such that
\begin{equation}
c_1 \nabla e^{i\psi(z)}=c_2 e^{i\psi(z)}
\end{equation}
in $\scriptd'(U)$.
Therefore $e^{i\psi}$ takes the required form.
\end{proof}

\begin{proof}[Proof of Lemma~\ref{lemma:submerse}]
Formally, the tangent space to $\Omega$ at a point
$(\vec{x},\vec{y})$ is the vector space of
all $(\vec{u},\vec{v})\in(\reals^3)^8$
which satisfy $u_j\perp x_j$ and $v_j\perp y_j$ for
$j\in\{1,2,3,4\}$,
$u_1+u_2=v_3+v_4$,
and
$v_1+v_2=u_3+u_4$.
This can be written as a system of $14$ scalar equations
for $24$ variables.
By the implicit function theorem,
$\Omega$ is a smooth $10$-dimensional manifold
in a neighborhood of any point for which this associated
vector space has the maximum possible dimension, $10$.

Writing $v_4=u_1+u_2-v_3$ and $u_4 = v_1+v_2-u_3$,
the relations $v_4\perp y_4$ and $u_4\perp x_4$
become inhomogeneous linear equations for $u_3,v_3$
in terms of $u_1,u_2,v_1,v_2$. It suffices to show
that for each $(u_1,u_2,v_1,v_2)$ satisfying
$u_j\perp x_j$ and $v_j\perp y_j$,
the set of all solutions $(u_3,v_3)$
of the four equations $u_3\perp x_3$, $u_4\perp x_4$, $v_3\perp y_3$,
and $v_4\perp y_4$
is an affine two-dimensional space.
Equivalently, we wish the mapping
$(u_3,v_3)\mapsto (u_3\cdot x_3,u_3\cdot x_4,v_3\cdot y_3,v_3\cdot y_4)$
to have a nullspace of dimension exactly two.
The conditions $x_3\ne \pm x_4$ and
$y_3\ne\pm y_4$ ensure this since $x_i,y_j\ne 0$.

Next, let $(\vec{x},\vec{y})\in\Omega$ satisfy \eqref{submersehypothesis}.
We wish to show that $\pi:\Omega\to\Lambda$ is a submersion at $(\vec{x},\vec{y})$.
The range of $D\pi$ on the associated tangent spaces
is the set of all $(u_1+v_1,\cdots,u_4+v_4)\in(\reals^3)^4$
where $(\vec{u},\vec{v})$ varies over the  space described above.
The tangent space of $\Lambda$ is the vector space
of all $w\in(\reals^3)^4$ which satisfy
$w_1+w_2=w_3+w_4$.
We will show that
for any $w\in (\reals^3)^4$,
there exists $(\vec{u},\vec{v})$
satisfying $u_j\perp x_j$ and $v_j\perp y_j$ for all $j$,
$u_1+u_2=v_3+v_4$,
and
$u_j+v_j=w_j$ for all $j$.
If $w$ satisfies
the tangency condition
$w_1+w_2=w_3+w_4$
is satisfied, then
\[
v_1+v_2-u_3-u_4
= (w_1+w_2-w_3-w_4)-(u_1+u_2-v_3-v_4)
=0-0
=0.
\]

Because $x_j\ne\pm y_j$,
each of the four equations $u_j+v_j=w_j$,
together with the constraints $u_j\perp x_j$
and $v_j\perp y_j$,
allows $u_j$ to vary freely over
a certain translate of the one-dimensional space
$\Span(x_j,y_j)^\perp$, and
specifies $v_j$ uniquely as a function of $u_j$.
Each can alternatively be regarded as allowing
$v_j$ to vary freely over a translate of
$\Span(x_j,y_j)^\perp$, and
specifying $u_j$ uniquely as a function of $v_j$.
Therefore we can solve for $v_1,v_2,u_3,u_4$
in terms of $(\vec{w}, u_1,u_2,v_3,v_4)$,
as $u_1,u_2,v_3,v_4$ each vary freely over
the appropriate one-dimensional affine subspace.

The only equation remaining to be satisfied is
$u_1+u_2-v_3-v_4=0$.
As $u_1,u_2,v_3,v_4$ vary freely over the allowed
affine spaces,
the function
$u_1+u_2-v_3-v_4$
takes on a constant value,
plus any element of
$\Span(x_1,y_1)^\perp +\Span(x_2,y_2)^\perp +\Span(x_3,y_3)^\perp +\Span(x_4,y_4)^\perp$.
Since the sum of these four spaces is assumed to equal $\reals^3$,
this function
$u_1+u_2-v_3-v_4$
has range $\reals^3$.
In particular, $0$ belongs to its range;
there does exist a solution of
$u_1+u_2-v_3-v_4=0$ satisfying the above constraints.

Thus there exists a solution of the given
system of equations for $(\vec{u},\vec{v})$.
Therefore $\pi$ is indeed a submersion at $(\vec{x},\vec{y})$.
\end{proof}

The following more quantitative result will be needed below in the analysis
of complex-valued extremizing sequences.

\begin{proposition} \label{prop:quantitative}
For any $\eps>0$ there exists $\delta>0$ with the following property.
Let $\scriptg\subset S^2\times S^2$ satisfy
$(\sigma\times\sigma)(S^{2+2}\setminus\scriptg)<\delta$.
Let $\varphi:S^2\to\reals$ and $\psi:B(0,2)\to\reals$
be measurable functions
satisfying
$|e^{i[\varphi(x)+\varphi(x')]}-e^{i\psi(x+x')}|<\delta$
for all $(x,x')\in\scriptg$.
Then there exist a set
$\scripte\subset B(0,2)\times B(0,2)$ satisfying $|\scripte|<\eps$ and
a measurable function $h:B(0,4)\to\complex$
such that
for all $(z,z')\in \big(B(0,2)\times B(0,2)\big)\setminus\scripte$,
\begin{equation}
\big|
e^{i[\psi(z)+\psi(z')]}-h(z+z')
\big|
<\eps.
\end{equation}
\end{proposition}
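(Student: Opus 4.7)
The plan is to carry out a quantitative version of the proof of Theorem~\ref{thm:complexextremizers}, tracking the $\delta$-errors through each step. The main technical obstacle will be maintaining uniform control of the Radon--Nikodym derivatives produced by the submersion $\pi:\Omega^\natural\to\Lambda$ of Lemma~\ref{lemma:submerse}, which degenerate both near the boundary $|z_j|=2$ and along the algebraic subvariety where \eqref{submersehypothesis} or the distinctness conditions fail.  Both difficulties are addressed by truncating at some scale $\eta=\eta(\eps)>0$ to be chosen after.  Throughout, $C$ denotes an absolute positive constant whose value may change, and $C_\eta$ denotes a constant that may depend on $\eta$.

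First I would reprise the algebraic identity from Lemma~\ref{lemma:psiadditive} with $\scriptg$ in place of $G$.  If $(\vec{x},\vec{y})\in\Omega^\natural$ has the property that each of the eight pairs $(x_j,y_j)$ for $j=1,\ldots,4$, together with $(x_1,x_2),(x_3,x_4),(y_1,y_2),(y_3,y_4)$, lies in $\scriptg$, then setting $z_j=x_j+y_j$ and applying the triangle inequality eight times yields $|e^{i[\psi(z_1)+\psi(z_2)-\psi(z_3)-\psi(z_4)]}-1|\le 8\delta$.  Let $\Omega^\natural_\scriptg$ denote the subset of $\Omega^\natural$ on which this is established.  Because $\rho$ factors locally, in any coordinate system obtained by selecting four of the eight two-dimensional variables $x_i,y_j$, as a product of four copies of $\sigma$ up to smooth Jacobians, the hypothesis $(\sigma\times\sigma)((S^2)^2\setminus\scriptg)<\delta$ gives $\rho(\Omega^\natural\setminus\Omega^\natural_\scriptg)\le C\delta$.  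Pushing this down through $\pi$ and restricting to any compact subset $K\subset\Lambda\cap B(0,2-\eta)^4$ on which \eqref{submersehypothesis} and the distinctness conditions hold robustly, the pushforward $\pi_*(\rho\vert_{\Omega^\natural})$ and $\lambda\vert_K$ are mutually absolutely continuous with Radon--Nikodym derivatives bounded above and below by $C_\eta$, so
\begin{equation}
\lambda\bigl(\{\vec{z}\in K:\,\bigl|e^{i[\psi(z_1)+\psi(z_2)-\psi(z_3)-\psi(z_4)]}-1\bigr|>8\delta\}\bigr)\le C_\eta\delta.
\end{equation}

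Finally, for $w\in\reals^3$ let $\Sigma_w=\{(z_1,z_2)\in B(0,2-\eta)^2:z_1+z_2=w\}$, let $\mu_w$ be normalized Lebesgue measure on this affine slice (extended by zero when empty), and define
\begin{equation}
h(w)=\int_{\Sigma_w} e^{i[\psi(z_1)+\psi(z_2)]}\,d\mu_w(z_1,z_2).
\end{equation}
The measure $\lambda\vert_K$ disintegrates as $d\mu_w(z_1,z_2)\,d\mu_w(z_3,z_4)\,dw$ up to smooth densities, so Fubini together with two applications of Chebyshev yield, off a set $\scripte\subset B(0,2)^2$ of Lebesgue measure at most $C\eta+C_\eta\delta^{1/4}$, the inequality $|e^{i[\psi(z_3)+\psi(z_4)]}-h(z_3+z_4)|\le C_\eta\delta^{1/4}$.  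Given $\eps>0$, choosing first $\eta$ so small that $C\eta<\eps/2$ and then $\delta$ so small that $C_\eta\delta^{1/4}<\eps/2$ yields both the measure and error bounds required by the proposition.
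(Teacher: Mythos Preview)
Your proposal is correct and follows essentially the same strategy as the paper's own proof: a quantitative form of Lemma~\ref{lemma:psiadditive} giving the $O(\delta)$ bound on $e^{i[\psi(z_1)+\psi(z_2)-\psi(z_3)-\psi(z_4)]}$, transferred from $\Omega^\natural$ to $\Lambda$ via the submersion $\pi$ with controlled Jacobians, followed by the averaging definition of $h$ over slices $\{z_1+z_2=w\}$ and a Fubini/Chebyshev argument. The only differences are presentational: the paper obtains the uniform Jacobian control by covering $B(0,2)$ (minus a small set) by finitely many small disks $V_\alpha$, each carrying its own local $h_\alpha$, rather than working with a single global compact $K$; and note that \eqref{submersehypothesis} and the distinctness conditions are conditions on points of $\Omega$, not of $\Lambda$, so your description of $K$ should more precisely read as restricting $\rho$ to a compact subset of $\Omega^\natural$ on which those conditions hold with an $\eta$-margin, and then taking $K$ to be its image under $\pi$.
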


\begin{proof}
Let $\eta>0$. If $\delta$ is sufficiently small
then there exists $\scripte_1\subset B(0,2)$
such that $|\scripte_1|<\eta$,
and $B(0,2)\setminus\scripte_1$ is contained in a union of
$N(\eta)<\infty$ disks $V_\alpha$
such that for each $\alpha$,
$V_\alpha\times V_\alpha$
is a neighborhood in $B(0,2)^2$ of a point $(z,z)$
for which there exists $(\vec{x},\vec{y})\in\Omega$
such that
$\pi(\vec{\bar x},\vec{\bar y})=(\bar z_1,\bar z_2,\bar z_3,\bar z_4)$
satisfies
$\bar z_1=\bar z_2=z$.
More precisely, $V_\alpha$ is sufficiently small
that $\pi$ is a submersion of a neighborhood $U_\alpha$
of
$(\vec{\bar x},\vec{\bar y})\in\Omega$
onto
a neighborhood of $(\bar z_1,\bar z_2,\bar z_3,\bar z_4)$ in $\Lambda$.
The mutual absolute continuity of $(\pi_*(\rho|_{U_\alpha}))\big|_{V_\alpha}$
and $\lambda|_{V_\alpha}$, together with the smallness of $(S^2\times S^2)\setminus\scriptg$,
imply that
for most $\vec{z}=(z_1,z_2,z_3,z_4)$ in $\pi(U_\alpha)$,
there exists $(\vec{x},\vec{y})\in U_\alpha$ satisfying $\pi(\vec{x},\vec{y})=\vec{z}$,
$(x_j,y_j)\in\scriptg$ for $j\in\{1,2,3,4\}$,
and
$(x_1,x_2),(x_3,x_4),(y_1,y_2),(y_3,y_4)$
all belong to $\scriptg$ as well.
Here ``most'' means that the set $E_\alpha$ of all
$\vec{z}\in\pi(U_\alpha)$
which lack such a representation
satisfies $\lambda(E_\alpha)<\eta/N(\eta)$,
provided that $\delta$ is chosen to be a sufficiently small function of $\eta$.

Define $S_\alpha$ to be the set of all $\vec{z}\in\pi(U_\alpha)$
which admit such a representation.
It follows from the proof of Lemma~\ref{lemma:psiadditive}
that
\begin{equation}
\big|e^{i[\psi(z_1)+\psi(z_2)-\psi(z_3)-\psi(z_4)]}-1\big|
=O(\delta)
\end{equation}
for all $\vec{z}\in S_\alpha$.

Define $T_\alpha$ to be the set of all
$(z_1,z_2,z'_1,z'_2)\in V_\alpha^4$
for which there exist $z_3,z_4$ such that both
$(z_1,z_2,z_3,z_4)$
and
$(z'_1,z'_2,z_3,z_4)$
belong to $S_\alpha$.
Such points satisfy $z_1+z_2=z'_1+z'_2$,
that is, $T_\alpha\subset\Lambda$.
Again
\begin{equation}
\big|e^{i[\psi(z_1)+\psi(z_2)-\psi(z'_1)-\psi(z'_2)]}-1\big|
=O(\delta)
\end{equation}
for all
$(z_1,z_2,z'_1,z'_2)\in T_\alpha$.
Moreover,
$\lambda((\Lambda\cap V_\alpha^4)\setminus T_\alpha)\to 0$ as $\delta\to 0$.

There exist
a measurable function $h_\alpha:V_\alpha\times V_\alpha\to\complex$
and
a function $\theta(\delta)$
which tends to zero as $\delta\to 0$,
such that
\begin{equation} \label{whathdoes}
\big|
e^{i[\psi(z_1)+\psi(z_2)]}-h(z_1+z_2)
\big| \le\theta(\delta)
\end{equation}
for all $(z_1,z_2)\in V_\alpha^2$, except for a subset of $V_\alpha^2$
whose measure is $\le\theta(\delta)$.
The function $\theta$ may be taken to depend only on $\delta$,
not in any other way on $\psi$.
Indeed, for $w\in V_\alpha+V_\alpha$, $h(w)$
may be defined to be the average value of $e^{i[\psi(z_1)+\psi(z_2)]}$,
where this average is taken over
$\{(z_1,z_2)\in V_\alpha^2: z_1+z_2=w\}$
with respect to the natural Lebesgue measure on that set.
As $\lambda\big((\Lambda\cap V_\alpha^4)\setminus T_\alpha\big)\to 0$,
the Lebesgue measure of the set of all $(z_1,z_2)\in V_\alpha^2$ which
fail to satisfy \eqref{whathdoes} tends to zero.
\end{proof}

\section{On approximate characters}

We seek to analyze functions
$\phi:S^2\to\reals$
for which
$e^{i[\phi(x)+\phi(x')]}$
is well approximated by a function
of $x+x'\in\reals^3$ alone,
for almost every pair $(x,x')\in S^2$.
In this section we study a more basic question of the
same type, in which the domain of the phase
function $\phi$ is an open set in $\reals^3$,
rather than a null set such as $S^2$.
By an approximate character in $\reals^3$, we mean
a real-valued function $\psi$
such that $e^{i[\psi(x)+\psi(y)]}$
is nearly equal to a function of $x+y$,
for nearly all pairs $(x,y)$ in an open set
in $\reals^3\times\reals^3$.
In this section we characterize approximate characters.
In the next section, the result will be applied to the analysis of functions
$\phi$ which nearly satisfy the functional equation only on
the null set $S^2\times S^2$.

\begin{proposition} \label{prop:character}
Let $D\subset\reals^d$ be any bounded disk.
For any $\eps>0$ there exists $\delta>0$ with the following property.
Let $\psi:D\to\reals$  and $h:D+D\to\complex$ be measurable functions
which satisfy
\begin{equation} \label{nearcharacter}
|\{(x,y)\in D\times D: \big|e^{i[\psi(x)+\psi(y)]}-h(x+y) \big|> \delta\}|<\delta.
\end{equation}
Then
there exist $\xi\in\reals^d$ and $c\in\complex$ satisfying $|c|=1$
such that
\begin{equation}
\norm{e^{i\psi(x)}-ce^{ix\cdot\xi}}_{\lt(D)}<\eps.
\end{equation}
\end{proposition}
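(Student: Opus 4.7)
The plan is compactness and contradiction, reducing the rigid limit to the distributional argument used in the proof of Lemma~\ref{lemma:separatevariables}. Suppose the proposition fails for some fixed $\eps>0$: extract sequences $\psi_n,h_n$ and $\delta_n\to 0$ for which the hypothesis holds with $\delta=\delta_n$ but the conclusion fails. Write $f_n=e^{i\psi_n}\one_D$ and $\hat f_n(\xi)=\int_D f_n(x)e^{-ix\cdot\xi}\,dx$. Optimization over unimodular $c$ identifies the failure of the conclusion with the quantitative gap $\sup_\xi|\hat f_n(\xi)|\le|D|-c_0$ for some $c_0>0$ independent of $n$. After truncating so that $|h_n|\le 1$, the hypothesis becomes $\|F_n-G_n\|_{\lt(\reals^{2d})}\to 0$, where $F_n(x,y)=f_n(x)f_n(y)$ and $G_n(x,y)=h_n(x+y)\one_{D^2}(x,y)$.

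Next I would extract the main Fourier bubble. Testing $F_n-G_n\to 0$ against $\overline{G_n}$ gives $\int(f_n*f_n)\overline{h_n}\to|D|^2$, and Cauchy--Schwarz combined with Plancherel yields $\sup_\xi|\hat f_n(\xi)|\ge\kappa|D|$ for an absolute constant $\kappa>0$. Choose $\xi_n$ nearly attaining the supremum and modulate: $\tilde f_n(x)=e^{-ix\cdot\xi_n}f_n(x)$, $\tilde h_n(u)=e^{-iu\cdot\xi_n}h_n(u)$. Pass to weak-$*$ limits $\tilde f_n\to\tilde f$ in $L^\infty(D)$ and $\tilde h_n\to\tilde h$ in $L^\infty(D+D)$; since $|\int_D\tilde f_n|=|\hat f_n(\xi_n)|\ge\kappa|D|-o(1)$, the limit $\tilde f$ is not identically zero. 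Strong $\lt$ convergence $\|F_n-G_n\|_2\to 0$ identifies the weak limits on both sides, yielding $\tilde f(x)\tilde f(y)=\tilde h(x+y)$ almost everywhere on $D^2$. The distributional argument from the proof of Lemma~\ref{lemma:separatevariables} applies with only the hypothesis $\tilde f\not\equiv 0$: pairing $(\nabla_x-\nabla_y)[\tilde f(x)\tilde f(y)]=0$ with a separable test function $\phi\otimes\chi$ for which $\int\tilde f\chi\ne 0$ produces $\nabla\tilde f=C\tilde f$ in $\scriptd'(D)$, and $|\tilde f|\le 1$ on the open set $D$ forces $C=i\xi$ with $\xi\in\reals^d$. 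So $\tilde f=ce^{ix\cdot\xi}$ with $|c|\le 1$; near-maximality of $\xi_n$ rules out $\xi\ne 0$ (otherwise $|\hat{\tilde f_n}(\xi)|\to|c||D|$ would exceed $|\hat{\tilde f_n}(0)|\to|c||\widehat{\one_D}(-\xi)|$ for large $n$), so $\tilde f=c$ is constant on $D$.

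All that remains is a case analysis on $|c|$. In the easy case $|c|=1$, the matching $\|\tilde f_n\|_2^2=|D|=\|\tilde f\|_2^2$ upgrades weak convergence to strong, contradicting the $\eps$-separation of $f_n$ from characters. The main obstacle is the case $0<|c|<1$, where weak-$*$ compactness does not capture the full $\lt$ mass and multiple Fourier bubbles are \emph{a priori} possible. Here I would expand
\[
F_n-G_n = c\bigl(r_n(x)+r_n(y)\bigr)+r_n(x)r_n(y)-\rho_n(x+y)
\]
with $r_n=\tilde f_n-c$ and $\rho_n=\tilde h_n-c^2$, and compute each pure squared $\lt$ norm asymptotically using $\|r_n\|_2^2\to(1-|c|^2)|D|$, $\int r_n\to 0$, and $\|\rho_n\|_{\lt(T\,du)}^2\to(1-|c|^4)|D|^2$, where $T=\one_D*\one_D$ is the autocorrelation of $\one_D$. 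Two of the three cross inner products vanish in the limit: $\langle c(r_n\oplus r_n),\,r_n\otimes r_n\rangle\to 0$ via $\int r_n\to 0$, and $\langle c(r_n\oplus r_n),\,\rho_n\circ{+}\rangle\to 0$ because $r_n\rightharpoonup 0$ forces $r_n*\one_D\to 0$ strongly in $\lt(D+D)$. Expanding $\|F_n-G_n\|_2^2\to 0$ then forces the remaining cross term to satisfy $2\Re\langle r_n*r_n,\,\rho_n\rangle\to 2(1-|c|^4)|D|^2$. However, the fiberwise Cauchy--Schwarz bound $|(r_n*r_n)(u)|^2\le T(u)(|r_n|^2*|r_n|^2)(u)$, combined with a $T$-weighted Cauchy--Schwarz in $u$, gives $|\langle r_n*r_n,\,\rho_n\rangle|\le\|r_n\|_2^2\,\|\rho_n\|_{\lt(T\,du)}\to(1-|c|^2)^{3/2}(1+|c|^2)^{1/2}|D|^2$, and the resulting comparison forces $1+|c|^2\le 1-|c|^2$, i.e.\ $c=0$, contradicting $|c|>0$.
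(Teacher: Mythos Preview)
Your overall strategy is sound and genuinely different from the paper's, but there is one false step. You claim that ``$|\tilde f|\le 1$ on the open set $D$ forces $C=i\xi$ with $\xi\in\reals^d$.'' This is not true: for instance $\tilde f(x)=e^{-2}e^{x_1}$ on the unit ball has $|\tilde f|\le 1$ while $C=(1,0,\dots,0)$ is real and nonzero. The weak-$*$ limit of unimodular functions need not be unimodular, so the argument from Lemma~\ref{lemma:separatevariables} (which uses $|e^{i\psi}|\equiv 1$ to force $\Re C=0$) does not transfer. Your near-maximality argument does force $\Im C=0$, but leaves $\Re C$ unconstrained.

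The gap is, however, easily repaired by skipping the reduction to $\tilde f=c$ altogether and running your hard-case expansion with the general exponential $\tilde f(x)=ae^{C\cdot x}$. Set $m=\|\tilde f\|_{\lt(D)}^2\in(0,|D|]$, $r_n=\tilde f_n-\tilde f$, $\rho_n=\tilde h_n-\tilde h$. Because $|\tilde f(x)\tilde f(y)|$ depends only on $x+y$, one still has $\|\tilde h\|_{\lt(T\,du)}^2=\|\tilde f\|_2^4=m^2$, hence $\|\rho_n\|_{\lt(T)}^2\to|D|^2-m^2$; and $\tilde f*r_n\to 0$ strongly in $\lt$ by the same dominated-convergence argument you used for $\one_D*r_n$, so the two easy cross terms still vanish. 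The expansion then gives $\Re\langle r_n*r_n,\rho_n\rangle\to |D|(|D|-m)$, while your fiberwise Cauchy--Schwarz bound gives $\limsup|\langle r_n*r_n,\rho_n\rangle|\le(|D|-m)\sqrt{|D|^2-m^2}$. Comparing forces $|D|\le\sqrt{|D|^2-m^2}$, i.e.\ $m=0$, a contradiction; so $m=|D|$, whence $|\tilde f|\equiv 1$, $\Re C=0$, and strong convergence follows.

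For comparison: the paper's proof also modulates by a near-maximizer of $|\widehat f|$, but then obtains \emph{strong} precompactness directly via the stationary-phase bound $|K(t,\xi)|\le C(1+|\xi|)^{-(d+1)/2}$ for the fiber integral $K(t,\xi)=\int_{x+y=t,\,(x,y)\in D^2}e^{-i(x-y)\cdot\xi/2}\,d\lambda_t$, which yields $\int_{|\xi|\ge R}|\widehat f|^2\le CR^{-1}+C\delta$. With precompactness in hand, any limit is unimodular and the distributional argument finishes immediately. Your route trades that oscillatory-integral input for a softer weak-$*$ compactness plus an energy/Cauchy--Schwarz endgame; it is arguably more elementary (the only ``compactness'' used is that convolution with an $\lt$ function sends weakly null sequences supported in $D$ to strongly null ones, which needs no decay beyond Plancherel), at the cost of a longer finish.
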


\begin{proof}
By a change of variables $x\mapsto a+rx$ we may assume that
$D$ is the unit disk centered at $0$.
We may assume without loss of generality that $|h(x+y)|=1$
for all $x+y\in D+D=2D$.
Define $h(x)=0$ for all $|x|>2$.

For $t\in \reals^d$ let
$\lambda_t$ denote Lebesgue measure on $\{(x,y)\in\reals^{d+d}: x+y=t\}$.
Define
\begin{align*}
f(x) &= e^{i\psi(x)}
\\
g(x,y) &= e^{i[\psi(x)+\psi(y)]}-h(x+y)
\\
G(t) &= \int_{x+y=t}g(x,y)\,d\lambda_t(x,y).
\end{align*}
Since $|h|\equiv 1$ on $2D$ and $|f|\equiv 1$,
$|g|\le 2$ and thus, by \eqref{nearcharacter},
\begin{equation}
\norm{G}_{\lt(\reals^d)}\to 0 \text{ as $\delta\to 0.$}
\end{equation}
Likewise define
\[
H(t) = \int_{x+y=t}h(x+y)\,d\lambda_t(x,y)
= h(t)\int_{x+y=t}\,d\lambda_t(x,y).
\]
$\norm{H}_{\lt(\reals^d)}$ is bounded above
by a constant independent of $\psi$.
Moreover, $\norm{H}_{\lt(\reals^d)}$ is bounded
below by a positive constant, independent of $\psi$.
$G,H$ vanish identically on the complement of $2D$.

For any $\eta\in\reals^d$,
\begin{align}
\widehat{f}(\eta)^2
&= \iint_{D^2}
e^{-i(x+y)\eta}e^{i[\psi(x)+\psi(y)]}\,dx\,dy
\\
&= \widehat{g}(\eta,\eta)
+ \iint_{D^2}
e^{-i(x+y)\eta}h(x+y)\,dx\,dy
\\
&=
\widehat{G}(\eta)+
\widehat{H}(\eta)
\end{align}
since
\begin{equation}
\widehat{g}(\eta,\eta) = \int e^{-it\cdot\eta}G(t)\,dt = \widehat{G}(\eta).
\end{equation}

Therefore, since $\norm{H}_2$ is uniformly positive
and $\norm{G}_2\to 0$ as $\delta\to 0$,
whenever $\delta$ is sufficiently small then
$\norm{(\widehat{f})^2}_{\lt}^2=\int_{\reals^d}|\widehat{f}(\eta)|^4\,d\eta$
is bounded below by a constant which depends only on the dimension $d$.
Since
\begin{gather*}
\int |\widehat{f}(\eta)|^2\,d\eta = (2\pi)^d \norm{f}_{\lt}^2=(2\pi)^d |D|
\\
\int_{\reals^d}|\widehat{f}(\eta)|^4\,d\eta\le \norm{\widehat{f}}_{L^\infty}^2
\norm{\widehat{f}}_{\lt}^2,
\end{gather*}
we conclude that there exist $c_0,c_1>0$ such that
if $\delta\le c_1$ then there exists $\zeta\in\reals^d$
such that
\begin{equation}
|\widehat{f}(\zeta)|\ge c_0.
\end{equation}
By replacing $\psi(x)$ by $\psi(x)-x\cdot\zeta$ we may and will assume that $\zeta=0$,
and thus that
$|\widehat{f}(0)|\ge c_0$.

Next, for any $\xi\in\reals^d$,
\begin{align}
\widehat{f}(\xi)\widehat{f}(0)
&= \iint_{D\times D} f(x)f(y)e^{-i\xi\cdot x}\,dx\,dy
\\
&= \iint_{D\times D}
e^{-i(x+y)\cdot\xi/2}
e^{-i(x-y)\cdot\xi/2}
h(x+y)\,dx\,dy
+
\widehat{g}(\xi,0)
\\
&=
\int h(t)e^{-it\cdot\xi/2}K(t,\xi)\,dt
+\widehat{g}(\xi,0)
\end{align}
where
\begin{equation}
K(t,\xi) =
\int_{x+y=t}
e^{-i(x-y)\cdot\xi/2}
\,d\lambda_t(x,y),
\end{equation}
with the restriction $(x,y)\in D^2$ in this integral.
The set of all $(x,y)\in D^2$ satisfying $x+y=t$
is naturally identified with a disk in $\reals^{d}$
of radius $\le 1$.
It is routine to verify that
\begin{equation}
|K(t,\xi)|\le C(1+|\xi|)^{-(d+1)/2}
\end{equation}
uniformly for all $t\in 2D$ and $\xi\in\reals^d$,
where $C<\infty$ depends only on the radius of $D$.
Therefore
\begin{equation}
\big|\int h(t)e^{-it\cdot\xi/2}K(t,\xi)\,dt\big|
\le
C(1+|\xi|)^{-(d+1)/2}.
\end{equation}
Thus there is an upper bound
\begin{equation}
\big|
\widehat{f}(\xi)\widehat{f}(0)
\big|
\le
C(1+|\xi|)^{-(d+1)/2}
+
C|\widehat{g}(\xi,0)|.
\end{equation}
Since $|\widehat{f}(0)|\ge c_0$,
this implies that
\begin{equation}
|\widehat{f}(\xi)|
\le
C(1+|\xi|)^{-(d+1)/2}
+
C|\widehat{g}(\xi,0)|,
\end{equation}
uniformly for all $\xi\in\reals^d$.

Now since $g$ is supported in the bounded set $D^2$,
\[
\int_{\reals^d} |\widehat{g}(\xi,0)|^2\,d\xi\le C\norm{g}_{\lt}^2
\le C\delta.
\]
Thus for any $R\ge 1$,
\begin{equation} \label{eq:precompactfouriertrick}
\int_{|\xi|\ge R}
\big|\widehat{f}(\xi)\big|^2\,d\xi
\le CR^{-1}+ C\delta.
\end{equation}

In order to prove Proposition~\ref{prop:character},
it suffices to prove the following: For
any sequence of functions $\psi_\nu$
satisfying the hypothesis with a sequence of constants
$\delta_\nu$ which tend to zero as $\nu\to\infty$,
there exist $c_\nu,\xi_\nu$
such that
$\norm{e^{i\psi_\nu(x)}-c_\nu e^{i\xi_\nu\cdot x}}_{\lt(D)}\to 0$
for some sequence of indices $\nu$ tending to $\infty$.

Let $\{\psi_\nu\}$ be such a sequence.
As shown above, by \eqref{eq:precompactfouriertrick}
there exists a sequence $\{\eta_\nu\}\subset\reals^d$
such that the set of functions $f_\nu (x) = e^{i[\psi_\nu(x)-\eta_\nu\cdot x]}$
is precompact in $\lt(D)$.
Passing to a convergent subsequence, we obtain
$f\in\lt(D)$ such that $\norm{f_\nu-f}_{\lt(D)}\to 0$.
Since $|f_\nu|\equiv 1$, $|f|\equiv 1$ as well,
so
$f(x)=e^{i\psi(x)}$ for some measurable real-valued function $\psi$.

For any $j\in \{1,2,\cdots,d\}$,
let $L_j$ denote
the partial differential operator $\partial_{x_j}-\partial_{y_j}$,
which acts on functions and distributions defined on open subsets of $\reals^{d+d}$.
For each index $\nu$,
write
\begin{equation}
e^{i[\psi_\nu(x)+\psi_\nu(y)]}
= h_\nu(x+y)+g_\nu(x,y).
\end{equation}
Thus
\begin{equation}
f_\nu(x)f_\nu(y) = e^{-i\eta_\nu\cdot(x+y)}h_\nu(x+y) + e^{-i\eta_\nu\cdot(x+y)}g_\nu(x,y)
= \tilde h_\nu(x+y)+\tilde g_\nu(x,y).
\end{equation}
Then $L_j(\tilde h_\nu)\equiv 0$,
and $L_j(\tilde g_\nu)\to 0$ in $H^{-1}(\reals^{d+d})$ as $\nu\to\infty$
since $\tilde g_\nu\to 0$ in $H^0$.
Therefore $L_j(f_\nu(x)f_\nu(y))\to 0$ in $H^{-1}(\reals^{d+d})$.
Therefore $L_j(f(x)f(y))\equiv 0$, in the sense of distributions.

Since this holds for each index $j$,
$f(x)f(y)$ must depend only on $x+y$, for almost every $(x,y)\in D\times D$.
This forces
$f(x)=e^{i\psi(x)}=ce^{ix\cdot\xi}$ for some $\xi\in\reals^d$
and some unimodular constant $c\in\complex$.
Thus
\begin{equation}
e^{i[\psi_\nu(x)-\eta_\nu\cdot x]}\to ce^{ix\cdot\xi} \text{ in $\lt(D)$.}
\end{equation}
Equivalently,
\begin{equation}
\norm{e^{i\psi_\nu(x)}-ce^{i(\xi+\eta_\nu)\cdot x}}_{\lt(D)}\to 0,
\end{equation}
as was to be proved.
\end{proof}
\section{Complex extremizing sequences}

Let $\{f_\nu\}$ be a sequence of complex-valued functions in $\lt(S^2)$
which satisfy $\norm{f_\nu}_2\to 1$
and $\norm{f_\nu\sigma*f_\nu\sigma}_{\lt(\reals^3)}\to \Sbest^2$
as $\nu\to\infty$.
Write $f_\nu = e^{i\varphi_\nu}F_\nu$ where $F_\nu=|f_\nu|$.

Define $\delta_\nu\ge 0$ by
$\norm{f_\nu\sigma*f_\nu\sigma}_{\lt(\reals^3)}=(1-\delta_\nu)^2 \Sbest^2$.
Then
$\delta_\nu\to 0$ as $\nu\to\infty$, and
$\norm{F_\nu\sigma*F_\nu\sigma}_{\lt(\reals^3)}\ge (1-\delta_\nu)^2 \Sbest^2$.

\begin{lemma} \label{lemma:fromphitopsi}
There exist measurable functions $\psi_\nu:B(0,2)\to\reals$
and positive numbers $\eta_\nu$
such that for each $\nu$,
\begin{equation}
\big|
e^{i[\varphi_\nu(x)+\varphi_\nu(x')]}
-
e^{i\psi_\nu(x+x')}
\big|
<\eta_\nu
\end{equation}
for all $(x,x')\in S^{2+2}$
except for a set whose $\sigma\times\sigma$ measure
is $<\eta_\nu$.
\end{lemma}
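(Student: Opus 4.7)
The plan is to compare $B_\nu := |f_\nu\sigma*f_\nu\sigma|$ with its pointwise majorant $A_\nu := F_\nu\sigma*F_\nu\sigma$, exploiting the fact that their $\lt$ norms asymptotically coincide. The hypothesis gives $\|B_\nu\|_{\lt} = (1-\delta_\nu)^2\Sbest^2 \to \Sbest^2$, while the bilinear form of \eqref{inequalityR} gives $\|A_\nu\|_{\lt}\le\Sbest^2\|F_\nu\|_{\lt}^2 \to \Sbest^2$; in particular $\{F_\nu\}$ is itself a nonnegative extremizing sequence, and $\|A_\nu^2 - B_\nu^2\|_{L^1} = \|A_\nu\|_{\lt}^2 - \|B_\nu\|_{\lt}^2 \to 0$. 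Defining $\psi_\nu(z):=\arg(f_\nu\sigma*f_\nu\sigma)(z)$ (extended measurably where the convolution vanishes), the representation $(f_\nu\sigma*f_\nu\sigma)(z) = \int e^{i[\varphi_\nu(x)+\varphi_\nu(y)]}F_\nu(x)F_\nu(y)\,d\mu_z$ yields, after taking the real part against $e^{-i\psi_\nu(z)}$ and subtracting from $A_\nu(z)$, the pointwise identity
\begin{equation*}
A_\nu(z) - B_\nu(z) = \tfrac{1}{2}\int \bigl|e^{i[\varphi_\nu(x)+\varphi_\nu(y)]} - e^{i\psi_\nu(z)}\bigr|^2 F_\nu(x)F_\nu(y)\,d\mu_z(x,y).
\end{equation*}
Integrating in $z$ and using the standard disintegration $\int d\mu_z\,dz = d\sigma\otimes d\sigma$ gives
\begin{equation*}
\int_{\reals^3}(A_\nu - B_\nu)\,dz = \tfrac{1}{2}\iint_{S^2\times S^2}\bigl|e^{i[\varphi_\nu(x)+\varphi_\nu(y)]} - e^{i\psi_\nu(x+y)}\bigr|^2 F_\nu(x)F_\nu(y)\,d\sigma\,d\sigma.
\end{equation*}

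Next I would show that the left-hand side tends to zero. By \cite{christshao1}, $\{F_\nu\}$ is precompact in $\lt$; any limit point $F$ is a nonnegative extremizer, hence smooth by Theorem~\ref{thm:smoothness} and bounded below by some $\delta>0$ almost everywhere by Corollary~\ref{cor:nonvanishing}. The uniform part of Lemma~\ref{lemma:trilinearHs}, combined with a bootstrap through the spaces $\scripth^s$, upgrades this precompactness into $C^0$, so the compact, pointwise-positive set of nonnegative extremizers admits a uniform positive lower bound $\delta_0$. Consequently $A_\nu \to F\sigma*F\sigma$ in $\lt$, and since $F\sigma*F\sigma \ge c > 0$ uniformly on $B(0,2-\tau)$ for any fixed small $\tau>0$, for large $\nu$ we have $A_\nu \ge c/2$ on a subset of $B(0,2-\tau)$ of near-full Lebesgue measure. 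There the bound $A_\nu - B_\nu \le 2(A_\nu^2 - B_\nu^2)/c$ gives vanishing contribution, and on the complement Cauchy--Schwarz together with the smallness of $|B(0,2)\setminus B(0,2-\tau)|$ finishes the estimate, so $\|A_\nu - B_\nu\|_{L^1(\reals^3)}\to 0$.

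Combining these, the right-hand side of the displayed identity tends to zero. On the set $\scriptg_\nu\subset S^2\times S^2$ where $F_\nu(x)F_\nu(y)\ge\delta_0^2/4$, which by the $C^0$ approximation above has $\sigma\otimes\sigma$ measure approaching $(4\pi)^2$, this forces
\begin{equation*}
\iint_{\scriptg_\nu}\bigl|e^{i[\varphi_\nu(x)+\varphi_\nu(y)]}-e^{i\psi_\nu(x+y)}\bigr|^2\,d\sigma\,d\sigma\to 0,
\end{equation*}
and a Chebyshev inequality then selects $\eta_\nu\to 0$ so slowly that the exceptional set has measure less than $\eta_\nu$, as required. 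The main obstacle is obtaining the uniform pointwise positivity $F_\nu(x)F_\nu(y)\ge \delta_0^2/4$ on nearly all of $S^2\times S^2$; this rests on both the $\lt$-precompactness of nonnegative extremizing sequences from \cite{christshao1} and the smoothness and nonvanishing lemmas proved earlier in this paper, plus a bootstrap to upgrade $\lt$ to $C^0$ compactness. The passage from subsequences back to the full sequence is routine via the standard device that every subsequence has a further subsequence for which the conclusion holds.
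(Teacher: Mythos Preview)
Your argument is correct, but it is organized rather differently from the paper's proof. The paper argues by contradiction: assuming the exceptional sets $\scripte^\nu$ have $\sigma\times\sigma$--measure bounded below by some fixed $\rho>0$, it uses the precompactness of $\{F_\nu\}$ and strict positivity of any limit extremizer to deduce $\int_{\scripte^\nu}F_\nu(x)F_\nu(x')\,d\sigma\,d\sigma\ge\eta>0$. Pushing this forward to $\reals^3$ produces a function $G_\nu^\flat\le G_\nu=F_\nu\sigma*F_\nu\sigma$ with $\norm{G_\nu^\flat}_{L^1}\ge\eta$, hence (via Chebyshev) a set $S_\nu$ of definite Lebesgue measure on which $|f_\nu\sigma*f_\nu\sigma|\le G_\nu-c\rho^2\delta$; this forces $\norm{f_\nu\sigma*f_\nu\sigma}_{\lt}\le\norm{G_\nu}_{\lt}-\gamma$ for fixed $\gamma>0$, contradicting the extremizing hypothesis.

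Your route is direct rather than by contradiction. The key pointwise identity
\[
A_\nu(z)-B_\nu(z)=\tfrac12\int\big|e^{i[\varphi_\nu(x)+\varphi_\nu(y)]}-e^{i\psi_\nu(z)}\big|^2 F_\nu(x)F_\nu(y)\,d\mu_z(x,y),
\]
integrated against $dz$ via the disintegration $\int d\mu_z\,dz=d\sigma\otimes d\sigma$, is a clean way to package what the paper does more implicitly. Your passage from $\norm{A_\nu^2-B_\nu^2}_{L^1}\to0$ to $\norm{A_\nu-B_\nu}_{L^1}\to0$ using the pointwise lower bound on $A_\nu$ is also valid. One simplification: the detour through Lemma~\ref{lemma:trilinearHs} and the $\scripth^s$--bootstrap to obtain $C^0$--compactness of the set of extremizers is unnecessary. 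Once $F_\nu\to F$ in $\lt$ along a subsequence and $F\ge\delta>0$ a.e.\ (Corollary~\ref{cor:nonvanishing}), convergence in measure already gives $\sigma\{F_\nu<\delta/2\}\to0$, which is all you need; this is essentially how the paper proceeds, and it avoids invoking the Euler--Lagrange equation for the $F_\nu$ themselves (which they do not satisfy).
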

\noindent A proof will be indicated below.

The proof of Theorem~\ref{thm:complexsequences} is concluded
by combining Lemma~\ref{lemma:fromphitopsi}
with ingredients developed above.
By Proposition~\ref{prop:quantitative},
there exist measurable functions $h_\nu:B(0,4)\to\complex$,
positive numbers $\eps_\nu$,
and measurable sets $\scripte_\nu\subset B(0,2)^2$
such that
$\eps_\nu\to 0$ and $|\scripte_\nu|\to 0$ as $\nu\to\infty$,
and
for all $(z,z')\in \big(B(0,2)\times B(0,2)\big)\setminus\scripte_\nu$,
$\big|
e^{i[\psi(z)+\psi(z')]}-h(z+z')
\big|
<\eps_\nu$.
By Proposition~\ref{prop:character},
there exist $\xi_\nu\in\reals^3$ and $c_\nu\in\complex$ satisfying $|c_\nu|=1$
such that
\begin{equation}
\norm{e^{i\psi_\nu(x)}-c_\nu e^{ix\cdot\xi_\nu}}_{\lt(B(0,2))}<\tilde\eps_\nu,
\end{equation}
where $\tilde\eps_\nu\to 0$ as $\nu\to\infty$.
Therefore by Lemma~\ref{lemma:fromphitopsi},
there exists a sequence $\eps_\nu^\dagger$ tending to $0$
such that
\begin{equation}
\big|
e^{i[\varphi_\nu(x)+\varphi_\nu(x')]}-c_\nu e^{i(x+x')\cdot\xi_\nu}
\big|
<\eps^\dagger_\nu,
\end{equation}
for all $(x,x')\in S^{2+2}$ except for an exceptional set, depending on $\nu$,
whose $\sigma\times\sigma$ measure tends to zero as $\nu\to\infty$.
By freezing a typical value of $x'$ and multiplying through
by $e^{-i\varphi_\nu(x')}$ we obtain
\begin{equation}
\big|
e^{i\varphi_\nu(x)}-\tilde c_\nu e^{ix\cdot\xi_\nu}
\big|
<\eps^\dagger_\nu,
\end{equation}
for all $x$ lying outside of an exceptional set whose
$\sigma$--measure tends to zero.
Here $\tilde c_\nu = c_\nu e^{ix'\cdot \xi_\nu-i\varphi_\nu(x')}$.
\qed

\begin{proof}[Proof of Lemma~\ref{lemma:fromphitopsi}]
Let $\{\rho_\nu\}$
be a sequence of positive numbers which tends to zero as $\nu\to\infty$.
Define
\begin{equation}
\scripte_{z}=\big\{(x,x')\in S^{2+2}: x+x'=z
\text{ and }
\big|
e^{i[\varphi_\nu(x)+\varphi_\nu(x')-\psi_\nu(z)]}-1
\big|
>\rho_\nu \big\}
\end{equation}
and
\begin{equation}
\scripte^\nu = \cup_{z\in B(0,2)}\scripte_z \subset S^2\times S^2.
\end{equation}
$\scripte_z$ depends on $\nu$, but this dependence is suppressed to simplify notation.

The assertion of the lemma is that if $\rho_\nu\to 0$ sufficiently slowly,
then $(\sigma\times\sigma)(\scripte^\nu)\to 0$. We will prove this by contradiction.
Thus we may assume that there exists $\rho>0$ such that if $\scripte_z,\scripte^\nu$
are redefined to be
\begin{gather}
\label{rhodefined}
\scripte_{z}=\big\{(x,x')\in S^{2+2}: x+x'=z
\text{ and }
\big|
e^{i[\varphi_\nu(x)+\varphi_\nu(x')-\psi_\nu(z)]}-1
\big|
>\rho \big\}
\\
\intertext{and}
\scripte^\nu = \cup_{z\in B(0,2)}\scripte_z \subset S^2\times S^2,
\end{gather}
then $(\sigma\times\sigma)(\scripte^\nu)\ge\rho$ for all $\nu$.

This implies that
\begin{equation}
\int_{\scripte^\nu} F_\nu(x)F_\nu(x')\,d\sigma(x)\,d\sigma(x')
\ge\rho' \text{ for all sufficiently large $\nu$}
\end{equation}
for some constant $\rho'>0$.
Indeed, by passing to a subsequence we may assume that $F_\nu\to F$ for some
nonnegative extremizer $F\in\lt(S^2)$.
By Lemma~\ref{lemma:strictlypositive}, $F>0$ almost everywhere on $S^2$.
Therefore uniformly for all sets $E\subset S^2$,
for any $\eps>0$,
$\int_E F\,d\sigma$ is bounded below by a strictly positive quantity
$\theta(\eps)$ whenever $\sigma(E)\ge\eps$.
Since $F_\nu\to F$ in $\lt(\sigma)$ norm,
it follows from Chebyshev's inequality that
for any $\eps>0$ there exists $N<\infty$ such that for every
$\nu\ge N$ and every subset $E\subset S^2$
satisfying $\sigma(E)\ge\eps$,
$\int_E F_\nu\,d\sigma\ge \tfrac12\theta(\eps)$.

In the same way it follows that for any $\eps>0$
there exist $\theta(\eps)>0$ and $N<\infty$
such that whenever $\nu\ge N$ and $E\subset S^2\times S^2$
satisfies $(\sigma\times\sigma)(E)\ge\eps$,
\begin{equation}
\int_E F_\nu(x)F_\nu(x')\,d\sigma(x)\,d\sigma(x')\ge\theta(\eps).
\end{equation}
Therefore there exists $\eta>0$ such that
\begin{equation}
\int_{\scripte^\nu} F_\nu(x)F_\nu(x')
\,d\sigma(x)\,d\sigma(x')
\ge\eta
\end{equation}
for all sufficiently large $\nu$; by discarding finitely many indices
we may assume that this holds for all $\nu$.

Recall the general formula
\begin{equation}
(h\sigma*h\sigma)(z) = c_0|z|^{-1}\int_{x+x'=z}h(x)h(x')
\,d\lambda_z(x,x'),
\end{equation}
where $c_0$ is a positive constant whose precise value is of no importance here,
and $\lambda_z$ is arc length measure on a certain (not necessarily great)
circle in $S^2\times S^2$, normalized to be a probability measure.
The push-forward from $S^2\times S^2$ to $\reals^3$
of the measure $F_\nu(x)F_\nu(x')\chi_{\scripte^\nu}(x,x') \,d\sigma(x)\,d\sigma(x')$
under the map $(x,x')\mapsto x+x'$ is equal to
\begin{equation}
G_\nu^\flat(z)=
 c_0|z|^{-1}\int_{\scripte_z}F_\nu(x)F_\nu(x')
\,d\lambda_z(x,x').
\end{equation}
Its $L^1$ norm equals the total variation measure  of
$F_\nu(x)F_\nu(x')\chi_{\scripte^\nu}(x,x')$.
Therefore
\begin{equation}
\norm{G_\nu^\flat}_{L^1(\reals^3)}
=
\int_{\scripte^\nu} F_\nu(x)F_\nu(x')
\,d\sigma(x)\,d\sigma(x')
\ge\eta.
\end{equation}

On the other hand,
since $G_\nu^\flat\le G_\nu=F_\nu\sigma*F_\nu\sigma$ pointwise,
$\norm{G_\nu^\flat}_{\lt(\reals^3)}$
is bounded above, uniformly in $\nu$.
It follows from Chebyshev's inequality that
there exists $\delta>0$
such that for every $\nu$, $G_\nu(z)\ge\delta$
for every point $z$ belonging to
a set $S_\nu\subset B(0,2)$, which satisfies $|S_\nu|\ge\delta$.

For any $z\in\reals^3$ satisfying $0<|z|<2$,
\[
(f_\nu\sigma*f_\nu\sigma)(z) = c_0|z|^{-1}\int_{x+x'=z} e^{i\varphi_\nu(x)+i\varphi_\nu(x')}
F_\nu(x)F_\nu(x')\,d\lambda_z(x,x').
\]
$e^{-i\psi_\nu(z)}
(f_\nu\sigma*f_\nu\sigma)(z)$ is real and positive by definition of $\psi_\nu$,
so
\begin{align}
\big|
(f_\nu\sigma*f_\nu\sigma)(z)\big|
&=
e^{-i\psi_\nu(z)}
(f_\nu\sigma*f_\nu\sigma)(z)
\\
&= c|z|^{-1}\int_{x+x'=z}
\Re\Big(e^{i[\varphi_\nu(x)+\varphi_\nu(x')-\psi_\nu(z)]}\Big)
F_\nu(x)F_\nu(x')\,d\lambda_z(x,x').
\end{align}
Now
\begin{equation}
\int_{\scripte_z}
\Re\Big(e^{i[\varphi_\nu(x)+\varphi_\nu(x')-\psi_\nu(z)]}\Big)
F_\nu(x)F_\nu(x')\,d\lambda_z(x,x')
\le
(1-c\rho_\nu^2)
\int_{\scripte_z}
F_\nu(x)F_\nu(x')\,d\lambda_z(x,x')
\end{equation}
for a certain positive constant $c$,
using the defining property \eqref{rhodefined} of $\rho$.
Therefore
\begin{equation}
|(f_\nu\sigma*f_\nu\sigma)(z)|
\le G_\nu(z)-c\rho^2 G_\nu^\flat(z)
\end{equation}
for all $z\in B(0,2)$,
and in particular,
\begin{equation}
|(f_\nu\sigma*f_\nu\sigma)(z)|
\le G_\nu(z)-c\rho^2 \delta
\end{equation}
for all $z\in S_\nu\subset B(0,2)$, with $|S_\nu|\ge\delta$.

Another elementary argument relying on Chebyshev's inequality
and the uniform upper bound for $\norm{G_\nu}_{\lt}$,
together with the fact that $0\le G_\nu(z)-c\rho^2 \delta\chi_{S_\nu}$,
demonstrates that
\begin{equation}
\norm{ G_\nu-c\rho^2 \delta\chi_{S_\nu}}_{\lt}
\le
\norm{ G_\nu}_{\lt}-\gamma
\end{equation}
for some positive quantity $\gamma$ which is independent of $\nu$.
Therefore
\begin{equation}
\norm{f_\nu\sigma*f_\nu\sigma}_{\lt}\le
\norm{ G_\nu}_{\lt}-\gamma
\le \sup_{\norm{f}_{\lt}\le 1} \norm{f\sigma*f\sigma}_{\lt}-\gamma
\end{equation}
for all $\nu$.
This contradicts the assumption that $\{f_\nu\}$
is an extremizing sequence, concluding the proof of the lemma.
\end{proof}

\end{document}